\providecommand{\U}[1]{\protect\rule{.1in}{.1in}}
\providecommand{\U}[1]{\protect \rule{.1in}{.1in}}
\newtheorem{theorem}{Theorem}[section]
\newtheorem{corollary}[theorem]{Corollary}
\newtheorem{definition}[theorem]{Definition}
\newtheorem{example}[theorem]{Example}
\newtheorem{lemma}[theorem]{Lemma}
\newtheorem{proposition}[theorem]{Proposition}
\newtheorem{remark}[theorem]{Remark}
\newenvironment{proof}[1][Proof]{\noindent \textbf{#1.} }{\  \rule{0.5em}{0.5em}}
\begin{document}

\title{Invariant and ergodic measures for $G$-diffusion processes}
\author{Mingshang Hu \thanks{Qilu Institute of Finance, Shandong University,
humingshang@sdu.edu.cn. Research supported by NSF (No. 11201262, 11101242 and 11301068) and Shandong Province (No. BS2013SF020)}
\and Hanwu Li\thanks{School of Mathematics, Shandong University,
lihanwu11@163.com.}
\and Falei Wang\thanks{School of Mathematics, Shandong University,
flwang2011@gmail.com.}\and Guoqiang Zheng\thanks{School of Mathematics, Shandong University,  zhengguoqiang.ori@gmail.com. Hu, Li, Wang and Zheng's research was
partially supported by NSF (No. 10921101) and by the 111
Project (No. B12023)}}
\date{}

\maketitle
\begin{abstract}
In this paper we study the problems of invariant and ergodic measures under $G$-expectation framework.
 In particular,    the stochastic differential
equations driven by $G$-Brownian motion ($G$-SDEs) have the   unique invariant and ergodic measures.
Moreover, the invariant and ergodic measures of $G$-SDEs are also sublinear expectations.
However, the invariant measures may not coincide with ergodic measures, which is different from the classical case.
\end{abstract}

\textbf{Key words}: $G$-diffusion process, invariant measure, ergodic  measure

\textbf{MSC-classification}: 60H10, 60H30

\section{Introduction}
Recently, Peng systemically established a time-consistent fully nonlinear
expectation theory (see \cite {Peng2005, P10, PengICM2010} and the references therein),
which is an effective  tool to study the problems of
model uncertainty, nonlinear stochastic dynamical systems and fully nonlinear partial differential equations (PDEs).
As a typical and important case, Peng  introduced the
$G$-expectation theory. In the  $G$-expectation framework, the
notion of  $G$-Brownian motion and the corresponding stochastic
calculus of It\^{o}'s type were also established. Moreover, Peng \cite {P10} and Gao \cite{G} obtained the existence and uniqueness theorem  of $G$-SDEs.

It is well known that invariant measure plays an important role in the theory of stochastic dynamical systems and ergodic theory.
In particular, the invariant measure can be thought of as describing the long-term
behaviour of a dynamical system, which  has many important applications in,
for example,  PDEs and  financial mathematics.
By far,
there are many papers in the literature which were devoted to study the invariant measures of Markov processes, both in finite and infinite
dimension spaces (see \cite {DZ} and the references therein).

The aim of this paper is to  study the asymptotic property of $G$-SDEs.
 First, we  obtain the existence and uniqueness theorem of invariant
measures for $G$-SDEs. The proof of the existence theorem is based on Daniell-Stone Theorem.
It is important to point out that the standard
techniques and results on invariant measures for Markov processes cannot be
applied to deal with this problem  because $G$-expectation is not
a linear expectation.
Under $G$-expectation framework, the invariant measure of $G$-SDE is a family of probability measures.   In particular, if the initial condition has the distribution equal to an
invariant measure, then the distribution of the solution to $G$-SDE is invariant in time as the classical case.
Next, we study the ergodicity of  $G$-SDEs.
Under nonlinear case,  the ergodic measure of $G$-SDE may not be
 the corresponding invariant measure.
 The proof of the existence theorem of  ergodic measure  is based on the theory of ergodic backward differential equations driven by $G$-Brownian motion, which is obtained in \cite{HW} (see also
 \cite{DH,H,R,RM}).

The paper is organized as follows. In  section 2, we present some notations and results which will be  used in this paper.
The  existence and uniqueness theorem of invariant measures of $G$-diffusion processes is established in section 3.
In section 4, we shall study the relationships between invariant measures and  ergodic measures under the $G$-expectation framework.

\section{Preliminaries}

The main purpose of this section is to recall some basic notions and results of $G$-expectation, which are needed in the sequel. The readers may refer to \cite{HJPS}, \cite{HJPS1},
\cite{P07a}, \cite{P08a}, \cite{P10} for more details.

\begin{definition}
\label{def2.1} Let $\Omega$ be a given set and let $\mathcal{H}$ be a vector
lattice of real valued functions defined on $\Omega$, namely $c\in \mathcal{H%
}$ for each constant $c$ and $|X|\in \mathcal{H}$ if $X\in \mathcal{H}$. $%
\mathcal{H}$ is considered as the space of random variables. A sublinear
expectation $\mathbb{\hat{E}}$ on $\mathcal{H}$ is a functional $\mathbb{%
\hat {E}}:\mathcal{H}\rightarrow \mathbb{R}$ satisfying the following
properties: for all $X,Y\in \mathcal{H}$, we have

\begin{description}
\item[(a)] Monotonicity: If $X\geq Y$ then $\mathbb{\hat{E}}[X]\geq \mathbb{%
\hat{E}}[Y]$;

\item[(b)] Constant preservation: $\mathbb{\hat{E}}[c]=c$;

\item[(c)] Sub-additivity: $\mathbb{\hat{E}}[X+Y]\leq \mathbb{\hat{E}}[X]+%
\mathbb{\hat{E}}[Y]$;

\item[(d)] Positive homogeneity: $\mathbb{\hat{E}}[\lambda X]=\lambda
\mathbb{\hat{E}}[X]$ for each $\lambda \geq0$.
\end{description}

The triple $(\Omega,\mathcal{H},\hat{\mathbb{E}})$ is called a
sublinear expectation space.  $X \in\mathcal{ H}$ is called a random
variable in $(\Omega,\mathcal{H},\hat{\mathbb{E}})$. We often call
$Y = (Y_1, \ldots, Y_d), Y_i \in\mathcal{ H}$ a $d$-dimensional
random vector in $(\Omega,\mathcal{H},\hat{\mathbb{E}})$.
\end{definition}

\begin{definition}
\label{def2.2} Let $X_{1}$ and $X_{2}$ be two $n$-dimensional random vectors
defined respectively in sublinear expectation spaces $(\Omega_{1},\mathcal{H}%
_{1},\mathbb{\hat{E}}_{1})$ and $(\Omega_{2},\mathcal{H}_{2},\mathbb{\hat{E}}%
_{2})$. They are called identically distributed, denoted by $X_{1}\overset{d}%
{=}X_{2}$, if $\mathbb{\hat{E}}_{1}[\varphi(X_{1})]=\mathbb{\hat{E}}%
_{2}[\varphi(X_{2})]$, for all$\ \varphi \in C_{Lip}(\mathbb{R}^{n})$,
where $C_{Lip}(\mathbb{R}^{n})$ is the space of real $\mathbb{R}$-valued Lipschitz continuous functions
defined on $\mathbb{R}^{n}$.
\end{definition}

\begin{definition}
\label{def2.3} In a sublinear expectation space $(\Omega,\mathcal{H},\mathbb{%
\hat{E}})$, a random vector $Y=(Y_{1},\cdot \cdot \cdot,Y_{n})$, $Y_{i}\in
\mathcal{H}$, is said to be independent of another random vector $%
X=(X_{1},\cdot \cdot \cdot,X_{m})$, $X_{i}\in \mathcal{H}$ under $\mathbb{%
\hat {E}}[\cdot]$, denoted by $Y\bot X$, if for every test function $\varphi
\in C_{Lip}(\mathbb{R}^{m}\times \mathbb{R}^{n})$ we have $\mathbb{\hat{E}}%
[\varphi(X,Y)]=\mathbb{\hat{E}}[\mathbb{\hat{E}}[\varphi(x,Y)]_{x=X}]$.
\end{definition}

\begin{definition}
\label{def2.4} ($G$-normal distribution) A $d$-dimensional random vector $%
X=(X_{1},\cdot \cdot \cdot,X_{d})$ in a sublinear expectation space $(\Omega,%
\mathcal{H},\mathbb{\hat{E}})$ is called $G$-normally distributed if for
each $a,b\geq0$ we have
\begin{equation*}
aX+b\bar{X}\overset{d}{=}\sqrt{a^{2}+b^{2}}X,
\end{equation*}
where $\bar{X}$ is an independent copy of $X$, i.e., $\bar{X}\overset{d}{=}X$
and $\bar{X}\bot X$. Here the letter $G$ denotes the function
\begin{equation*}
G(A):=\frac{1}{2}\mathbb{\hat{E}}[\langle AX,X\rangle]:\mathbb{S}%
_{d}\rightarrow \mathbb{R},
\end{equation*}
where $\mathbb{S}_{d}$ denotes the collection of $d\times d$ symmetric
matrices.
\end{definition}

Let $\Omega=C_{0}([0,\infty);\mathbb{R}^{d})$, the space of
$\mathbb{R}^{d}$-valued continuous functions on $[0,\infty)$ with $\omega_{0}=0$, be endowed
with the distance
$$
\rho(\omega^1, \omega^2):=\sum^\infty_{N=1} 2^{-N} [(\max_
{t\in[0,N]} | \omega^1_t-\omega^2_t|)  \wedge 1],
$$
and $B=(B^i)_{i=1}^d$ be the canonical
process. For each $T>0$, denote
\[
L_{ip} (\Omega_T):=\{ \varphi(B_{t_{1}},...,B_{t_{n}}):n\geq1,t_{1}%
,...,t_{n}\in\lbrack0,T],\varphi\in C_{Lip}(\mathbb{R}^{d\times n})\}, \ L_{ip} (\Omega):=\underset{T}{\cup}L_{ip} (\Omega_T).
\]
For any given monotonic and sublinear function
$G:\mathbb{S}_{d}\rightarrow\mathbb{R}$, let  $(\Omega,  L_{ip} (\Omega),\mathbb{\hat{E}},\mathbb{\hat{E}}_t)$ be the $G$-expectation space,
where
$G(A)=\frac{1}{2}\mathbb{\hat{E}}[\langle AB_1,B_1\rangle]\leq \frac{1}{2}\bar{\sigma}^2|A|$.

 Denote by $L_{G}^{p}(\Omega)$   the completion of
$L_{ip} (\Omega)$ under the norm $\Vert\xi\Vert_{L_{G}^{p}}:=(\mathbb{\hat{E}}[|\xi|^{p}])^{1/p}$ for $p\geq1$.
Denis et al. \cite{DHP11}
proved that the completions of $C_{b}(\Omega)$ (the set of bounded
continuous function on $\Omega$) and $L_{ip} (\Omega)$ under $\Vert\cdot\Vert_{L_{G}^{p}}$ are the same. Similarly, we can define $L_{G}^{p}(\Omega_T)$ for each $T>0$.
\begin{theorem}[\cite{DHP11,HP09}]
\label{the2.7}  There exists a weakly compact set
$\mathcal{P}\subset\mathcal{M}_{1}(\Omega)$, the set of all probability
measures on $(\Omega,\mathcal{B}(\Omega))$, such that
\[
\mathbb{\hat{E}}[\xi]=\sup_{P\in\mathcal{P}}E_{P}[\xi]\ \ \text{for
\ all}\ \xi\in  {L}_{G}^{1}{(\Omega)}.
\]
$\mathcal{P}$ is called a set that represents $\mathbb{\hat{E}}$.
\end{theorem}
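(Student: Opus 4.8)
The plan is to represent the sublinear functional $\hat{\mathbb{E}}$ first as an upper envelope of linear functionals and then to realize each of these as a genuine probability measure. First I would apply the Hahn--Banach theorem on the vector lattice $C_b(\Omega)$: since $\hat{\mathbb{E}}$ is sublinear, the subdifferential $\Theta := \{\ell : \ell \text{ linear}, \ \ell \leq \hat{\mathbb{E}} \text{ on } C_b(\Omega)\}$ is nonempty, and for every $\xi$ there is $\ell \in \Theta$ with $\ell(\xi) = \hat{\mathbb{E}}[\xi]$, so that $\hat{\mathbb{E}}[\xi] = \sup_{\ell \in \Theta} \ell(\xi)$. Each $\ell \in \Theta$ inherits from the domination $\ell \leq \hat{\mathbb{E}}$ the structure of a normalized positive linear functional: from $-X \leq 0 \Rightarrow \hat{\mathbb{E}}[-X] \leq 0$ one gets $\ell(X) = -\ell(-X) \geq 0$ for $X \geq 0$, and from $\ell(1) \leq \hat{\mathbb{E}}[1] = 1$ together with $\ell(-1) \leq \hat{\mathbb{E}}[-1] = -1$ one gets $\ell(1) = 1$.

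Next I would upgrade each $\ell$ to a countably additive probability measure. The tool is the Daniell--Stone (equivalently Riesz) representation theorem, whose hypothesis is the downward continuity $\ell(X_n) \downarrow 0$ whenever $X_n \in C_b(\Omega)$ with $X_n \downarrow 0$ pointwise. Since $0 \leq \ell(X_n) \leq \hat{\mathbb{E}}[X_n]$, it suffices to prove the single \emph{regularity} statement $\hat{\mathbb{E}}[X_n] \downarrow 0$ for such sequences. Granting this, Daniell--Stone produces for each $\ell$ a Borel probability measure $P_\ell$ on the Polish space $\Omega = C_0([0,\infty);\mathbb{R}^d)$ with $\ell(X) = E_{P_\ell}[X]$ for all $X \in C_b(\Omega)$; setting $\mathcal{P} := \{P_\ell : \ell \in \Theta\}$ then gives $\hat{\mathbb{E}}[X] = \sup_{P \in \mathcal{P}} E_P[X]$ on $C_b(\Omega)$.

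For weak compactness I would use Prokhorov's theorem on the Polish space $\Omega$, which reduces the task to tightness of $\mathcal{P}$. This follows from a uniform modulus-of-continuity estimate: the moment bounds $\hat{\mathbb{E}}[|B_t - B_s|^{2m}] \leq C_m |t-s|^m$ for the canonical process, combined with a Kolmogorov-type criterion, furnish a functional $\Phi \geq 0$ with precompact sublevel sets and $\sup_{P \in \mathcal{P}} E_P[\Phi] = \hat{\mathbb{E}}[\Phi] < \infty$, after which Markov's inequality yields the tight compacta. Replacing $\mathcal{P}$ by its weak closure preserves the representation on $C_b(\Omega)$ and renders the representing set weakly compact. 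Finally I would extend the identity from $C_b(\Omega)$ to all of $L_G^1(\Omega)$: both $\hat{\mathbb{E}}[\cdot]$ and $\sup_{P \in \mathcal{P}} E_P[\cdot]$ are $1$-Lipschitz for $\Vert \cdot \Vert_{L_G^1}$, the latter because $|\sup_P E_P[\xi] - \sup_P E_P[\eta]| \leq \sup_P E_P[|\xi-\eta|] \leq \hat{\mathbb{E}}[|\xi-\eta|]$, and since the completions of $C_b(\Omega)$ and $L_{ip}(\Omega)$ under $\Vert \cdot \Vert_{L_G^1}$ coincide, the equality passes to the limit on the dense subspace.

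The main obstacle is the regularity estimate $\hat{\mathbb{E}}[X_n] \downarrow 0$ for $X_n \in C_b(\Omega)$ decreasing pointwise to $0$. Unlike in the linear (single-measure) case, this is not a formal consequence of the sublinear-expectation axioms, since a generic sublinear expectation on $C_b(\Omega)$ can fail to be regular; moreover one cannot simply deduce it from tightness via Dini's theorem, because tightness of $\mathcal{P}$ presupposes that the measures $P_\ell$ already exist. Establishing regularity therefore genuinely uses the structure of $G$-Brownian motion: in practice one exhibits a concrete tight family (the laws of It\^o processes whose volatilities range over the set associated with $G$) whose upper expectation agrees with $\hat{\mathbb{E}}$ on $L_{ip}(\Omega)$, from which the downward continuity follows. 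This is precisely the point at which the probabilistic construction, rather than soft functional analysis, becomes indispensable.
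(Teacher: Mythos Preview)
The paper does not prove this theorem at all; it is quoted verbatim as a preliminary result from \cite{DHP11,HP09}. Your outline is a faithful sketch of the argument in those references: the Hahn--Banach representation of a sublinear functional by dominated linear functionals, the Daniell--Stone upgrade to Borel probability measures, tightness via Kolmogorov-type moment estimates for the canonical process, and the density extension from $C_b(\Omega)$ to $L_G^1(\Omega)$.

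You have also correctly isolated the one genuinely nontrivial step, namely the downward continuity $\hat{\mathbb{E}}[X_n]\downarrow 0$ for $X_n\in C_b(\Omega)$ with $X_n\downarrow 0$. One remark on the logical structure: once you invoke the concrete tight family of laws of It\^o diffusions with volatility ranging over the set associated with $G$ (as in \cite{DHP11}) to obtain regularity, that family already furnishes the representing set $\mathcal{P}$ directly, and the Hahn--Banach detour becomes redundant. In other words, the probabilistic construction you defer to at the end is not merely a technical lemma supporting the abstract argument; in the cited references it \emph{is} the proof, and your steps 1--4 are then consequences rather than prerequisites. Your sketch is correct, but it presents as two independent layers what is really one construction.
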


Let $\mathcal{P}$ be a weakly compact set that represents $\mathbb{\hat{E}}$.
For this $\mathcal{P}$, we define capacity%
\[
c(A):=\sup_{P\in\mathcal{P}}P(A),\ A\in\mathcal{B}(\Omega).
\]
A set $A\subset\mathcal{B}(\Omega)$ is polar if $c(A)=0$.  A
property holds $``quasi$-$surely''$ (q.s.) if it holds outside a
polar set. In the following, we do not distinguish two random
variables $X$ and $Y$ if $X=Y$ q.s..

\begin{definition}
\label{def2.6} Let $M_{G}^{0}(0,T)$ be the collection of processes in the
following form: for a given partition $\{t_{0},\cdot\cdot\cdot,t_{N}\}=\pi
_{T}$ of $[0,T]$,
\[
\eta_{t}(\omega)=\sum_{j=0}^{N-1}\xi_{j}(\omega)\mathbf{1}_{[t_{j},t_{j+1})}(t),
\]
where $\xi_{i}\in L_{ip}(\Omega_{t_{i}})$, $i=0,1,2,\cdot\cdot\cdot,N-1$. For each
$p\geq1$,  denote by  $M_{G}^{p}(0,T)$ the completion
of $M_{G}^{0}(0,T)$ under the norm $\Vert\eta\Vert_{M_{G}^{p}}:=(\mathbb{\hat{E}}[\int_{0}^{T}|\eta_{s}|^{p}ds])^{1/p}$.
\end{definition}

For two processes $ \eta\in M_{G}^{2}(0,T)$ and $ \xi\in M_{G}^{1}(0,T)$,
the $G$-It\^{o} integrals  $(\int^{t}_0\eta_sdB^i_s)_{0\leq t\leq T}$ and $(\int^{t}_0\xi_sd\langle
B^i,B^j\rangle_s)_{0\leq t\leq T}$  are well defined, see  Li-Peng \cite{L-P} and Peng \cite{P10}.

\section{Invariant measures}
In this section, we shall study the invariant measures of $G$-diffusion processes. Let $G:\mathbb{S}%
_{d}\rightarrow\mathbb{R}$ be a given monotonic and sublinear function
and $B_{t}=(B_{t}^{i})_{i=1}^{d}$ be the corresponding $d$-dimensional $G$-Brownian motion.
For a given integer $p\geq 1$, a real-valued function $f$ defined on $\mathbb{R}^n$ is said to be in $C_{p,Lip}(\mathbb{R}^n)$ if there exists a constant $K_f$ depending on $f$ such that
$|f(x)-f(x^{\prime})|\leq K_f(1+|x|^{p-1}+|x^{\prime}|^{p-1})|x-x^{\prime}|.$
Consider the following type of $G$-SDEs (in this paper we always use Einstein convention): for each $t\geq 0$ and $\xi\in L_{G}^{m}(\Omega_{t})$ with $m\geq2$,
\begin{align} \label{App1}
X_{s}^{t,\xi}=\xi+\int^s_tb(X_{r}^{t,\xi})dr+\int^s_th_{ij}(X_{r}^{t,\xi})d\langle
B^i,B^j\rangle_{r}+\int^s_t\sigma(X_{r}^{t,\xi})dB_{r},
\end{align}
where $b$, $h_{ij}:\mathbb{R}^{n}\rightarrow \mathbb{R}^{n}$, $\sigma
:\mathbb{R}^{n}\rightarrow \mathbb{R}^{n\times d}$ are deterministic continuous
functions. In particular, denote $X^x=X^{0,x}$.
Consider also the following assumptions:
\begin{description}
\item[(H1)] There exists a constant $L>0$ such that%
\begin{align*}
&|b(x)-b(x^{\prime})|+\sum\limits_{i,j}|h_{ij}(x)-h_{ij}(x^{\prime
})|+ |\sigma(x)-\sigma(x^{\prime})|\leq
L|x-x^{\prime}|.
\end{align*}
\item[(H2)] $G((2p-1)\sum_{i=1}^{n}(\sigma_{i}(x)-\sigma_{i}(x^{\prime}%
))^{T}(\sigma_{i}(x)-\sigma_{i}(x^{\prime}))+2(\langle x-x^{\prime}%
,h_{ij}(x)-h_{ij}(x^{\prime})\rangle)_{i,j=1}^{d})+\langle x-x^{\prime
},b(x)-b(x^{\prime})\rangle \leq-\eta|x-x^{\prime}|^{2}$ for some constants
$\eta>0$, where $\sigma_{i}$ is the $i$-th row of $\sigma$.
\end{description}

We have the following estimates of $G$-SDEs which can be found in Chapter V in
Peng \cite{P10}.

\begin{lemma}
\label{proA.1}
Under assumption \emph{(H1)},
the $G$-SDE \eqref{App1} has a unique solution $X^{t,\xi}\in M^2_G(t,T)$ for each $T>t$.
Moreover, if $\xi$, $\xi^{\prime}\in L_{G}^{m}(\Omega_{t})$ with $m\geq2$, then we have, for each $\delta\in\lbrack0,T-t]$,%
\begin{description}
\item[(i)] $\mathbb{\hat{E}}_{t}[\sup\limits_{s\in\lbrack t,T]}|X_{s}^{t,\xi}-X_{s}^{t,\xi^{\prime}}%
|^{m}]\leq C^{\prime}|\xi-\xi^{\prime}|^{m}$;
\item[(ii)] $\mathbb{\hat{E}}_{t}[\sup\limits_{s\in\lbrack t,T]}|X_{s}^{t,\xi}|^{m}]\leq C^{\prime}(1+|\xi|^{m})$;
\item[(iii)] $\mathbb{\hat{E}}_{t}[\sup\limits_{s\in\lbrack t,t+\delta]}|X_{s}^{t,\xi}-\xi
|^{m}]\leq C^{\prime}(1+|\xi|^{m})\delta^{m/2}$,
\end{description}
where the constant $C^{\prime}$ depends on $L$, $G$, $m$, $n$ and $T$.
\end{lemma}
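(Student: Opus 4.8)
The plan is to treat the two assertions separately: first the well-posedness of \eqref{App1} in $M_G^2(t,T)$, then the three quantitative estimates (i)--(iii). For existence and uniqueness I would run a standard contraction argument on the Banach space $M_G^2(t,T)$, equipped if convenient with an equivalent norm weighted by $e^{\beta s}$. Define the map $\Gamma$ sending a process $Y$ to the right-hand side of \eqref{App1} with $X^{t,\xi}$ replaced by $Y$; the Lipschitz condition (H1) guarantees that each of the three integrands is Lipschitz, so $\Gamma$ maps $M_G^2(t,T)$ into itself. Using the $G$-It\^o isometry and the BDG-type inequality for the stochastic integral $\int\sigma(Y_r)\,dB_r$, together with the elementary $L^2$ bounds for the $dr$ and $d\langle B^i,B^j\rangle_r$ integrals (the latter controlled by $\bar\sigma^2$), one shows that $\Gamma$ is a contraction on a sufficiently short interval; patching over successive subintervals of $[t,T]$ yields the unique fixed point.

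For estimate (i) I would apply the $G$-It\^o formula to $|X_s^{t,\xi}-X_s^{t,\xi'}|^m$ (applying it first to $(|\cdot|^2+\varepsilon)^{m/2}$ and letting $\varepsilon\to 0$ when $m$ is not an even integer). Writing $\Delta_s:=X_s^{t,\xi}-X_s^{t,\xi'}$, the drift and quadratic-variation contributions are bounded via (H1) by a constant times $\int_t^s|\Delta_r|^m\,dr$, while the supremum of the stochastic-integral term is controlled by the $G$-BDG inequality and then absorbed into the left-hand side by Young's inequality. Taking conditional expectation $\mathbb{\hat{E}}_t$ and invoking the $G$-version of Gronwall's lemma delivers the bound $C'|\xi-\xi'|^m$.

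Estimate (ii) follows from the same scheme applied to $|X_s^{t,\xi}|^m$, where the linear-growth bounds $|b(x)|\le|b(0)|+L|x|$ (and analogously for $h_{ij}$ and $\sigma$) that follow from (H1) produce, after Gronwall, the bound $C'(1+|\xi|^m)$. For estimate (iii) one estimates the three integral terms of $X_s^{t,\xi}-\xi$ directly on the short interval $[t,t+\delta]$: by H\"older/Jensen together with (ii), the $dr$ and $d\langle B\rangle$ integrals each contribute $O(\delta^m)$, while the stochastic integral contributes $O(\delta^{m/2})$ via BDG; since $\delta^m\le\delta^{m/2}$ for $\delta\le 1$, the dominant term is $\delta^{m/2}$, giving the stated bound $C'(1+|\xi|^m)\delta^{m/2}$.

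The main obstacle is that $\mathbb{\hat{E}}_t$ is only sublinear, so the linear conditional-expectation identities used in the classical proofs must be replaced by their $G$-analogues: the tower property, the BDG inequality, and Gronwall's lemma all hold in the $G$-framework but only in the sublinear forms established in Peng \cite{P10}. Because sub-additivity yields merely one-sided inequalities, every step must be arranged as an upper bound on $\mathbb{\hat{E}}_t[\cdots]$, never relying on cancellations that would require additivity; and since $\xi\in L_G^m(\Omega_t)$ is $\mathbb{\hat{E}}_t$-measurable, it may be treated as known data under the conditioning throughout.
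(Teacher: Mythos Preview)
Your proposal is correct and follows the standard route (Picard contraction for well-posedness, $G$-It\^o formula plus BDG and Gronwall for the moment estimates), with the right attention paid to the sublinearity of $\mathbb{\hat{E}}_t$. Note, however, that the paper does not actually prove this lemma: it is simply quoted as a known result from Chapter~V of Peng~\cite{P10}, and your outline is essentially the argument one finds there. One small remark: your justification ``$\delta^m\le\delta^{m/2}$ for $\delta\le1$'' in part~(iii) needs the extra observation that for $\delta>1$ one has $\delta^m\le(T-t)^{m/2}\delta^{m/2}$, which is harmless since $C'$ is allowed to depend on $T$.
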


The following result is important in our future discussion (see also \cite{HW}). Specially, the constant $C$ is independent of $T$.
\begin{lemma}\label{HW2}
 Under assumptions \emph{(H1)} and \emph{(H2)}, if $\xi$, $\xi^{\prime}\in L_{G}^{2p}(\Omega_{t})$,
 then there exists a constant ${C}$ depending on $G, L, p, n$ and $\eta$, such that:
\begin{description}
\item[(i)] $\hat{\mathbb{E}}_t[|X_{s}^{t,\xi}-X_{s}^{t,\xi^{\prime}}|^{2p}]\leq\exp(-2\eta p(s-t))|\xi-\xi^{\prime}|^{2p}$;
\item[(ii)] $\hat{\mathbb{E}}_t[|X_{s}^{t,\xi}|^{2p}]\leq {C}(1+|\xi|^{2p}), \ \forall t>0$.
\end{description}
\end{lemma}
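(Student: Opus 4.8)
The plan is to apply the $G$-It\^o formula to $\Phi(y)=|y|^{2p}$ along the difference process $\hat X_s:=X_s^{t,\xi}-X_s^{t,\xi'}$ and to convert the resulting drift into exactly the left-hand side of (H2), so that the strict dissipativity produces an exponential rate. The decisive structural point is that this rate must come from an integrating factor tied to $\eta$, \emph{not} from a Gronwall estimate on $[t,T]$, since only the former yields a bound whose constant is independent of $T$.

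First I would subtract the two copies of \eqref{App1}, so that $\hat X_s$ solves an equation whose drift, $d\langle B^i,B^j\rangle$ and $dB$ coefficients are the increments $\Delta b:=b(X^{t,\xi})-b(X^{t,\xi'})$, $\Delta h_{ij}$ and $\Delta\sigma$. Applying $G$-It\^o to $|\hat X_s|^{2p}$ and using $\partial^2_{yy}\Phi(y)=2p|y|^{2p-2}I+2p(2p-2)|y|^{2p-4}yy^{T}$, the non-$dB$ part consists of the genuine $ds$-drift $2p|\hat X_s|^{2p-2}\langle \hat X_s,\Delta b\rangle$ together with a $d\langle B^i,B^j\rangle$-term whose coefficient matrix is
\[
M_{ij}=2p|\hat X_s|^{2p-2}\langle \hat X_s,\Delta h_{ij}\rangle+p|\hat X_s|^{2p-2}\big((\Delta\sigma)^{T}\Delta\sigma\big)_{ij}+p(2p-2)|\hat X_s|^{2p-4}\big((\Delta\sigma)^{T}\hat X_s\big)_i\big((\Delta\sigma)^{T}\hat X_s\big)_j.
\]
Taking $\hat{\mathbb E}_t$ kills the $dB$-integral (a $G$-martingale with zero conditional $G$-expectation) and bounds the $d\langle B\rangle$-term by $\hat{\mathbb E}_t[\int_t^s 2G(M_r)\,dr]$.

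The step I expect to be the main obstacle is matching $2G(M)$ to (H2). Factoring the scalar $p|\hat X_s|^{2p-2}\ge0$ out of $M$ (the case $\hat X_s=0$ being trivial) and using positive homogeneity of $G$, one reduces to $G$ of $2(\langle\hat X_s,\Delta h_{ij}\rangle)_{ij}+(\Delta\sigma)^T\Delta\sigma+(2p-2)|\hat X_s|^{-2}((\Delta\sigma)^T\hat X_s)((\Delta\sigma)^T\hat X_s)^T$. Here Cauchy--Schwarz gives $|\hat X_s|^{-2}((\Delta\sigma)^T\hat X_s)((\Delta\sigma)^T\hat X_s)^T\le(\Delta\sigma)^T\Delta\sigma$ in the positive semidefinite order, so the bracketed matrix is dominated by $(2p-1)(\Delta\sigma)^T\Delta\sigma+2(\langle\hat X_s,\Delta h_{ij}\rangle)_{ij}$; monotonicity of $G$ then produces precisely the $G$-term of (H2). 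Adding back the $ds$-drift and invoking (H2) with $x=X^{t,\xi}_r,\ x'=X^{t,\xi'}_r$ yields the effective integrand bound $-2\eta p|\hat X_r|^{2p}$.

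Finally I would run the same computation on $e^{2\eta p(s-t)}|\hat X_s|^{2p}$: the extra drift $2\eta p\,e^{2\eta p(s-t)}|\hat X_s|^{2p}$ cancels the dissipative term, so $\hat{\mathbb E}_t[e^{2\eta p(s-t)}|\hat X_s|^{2p}]\le|\xi-\xi'|^{2p}$, which is (i). For (ii) the identical scheme is applied to $|X^{t,\xi}_s|^{2p}$ using (H2) with $x'=0$; the only additional work is absorbing the frozen constants $b(0),h_{ij}(0),\sigma(0)$ by Young's inequality, which replaces the clean dissipative bound by $-\eta p|X^{t,\xi}_r|^{2p}+C'$. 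Integrating $e^{\eta p(s-t)}|X^{t,\xi}_s|^{2p}$ then gives $\hat{\mathbb E}_t[|X^{t,\xi}_s|^{2p}]\le e^{-\eta p(s-t)}|\xi|^{2p}+\frac{C'}{\eta p}(1-e^{-\eta p(s-t)})\le C(1+|\xi|^{2p})$, with $C$ depending only on $G,L,p,n,\eta$ and, crucially, not on $T$.
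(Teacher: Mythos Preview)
Your approach is correct and coincides with the paper's: both apply $G$-It\^o to $e^{2\eta p(s-t)}|\hat X_s|^{2p}$, replace the $d\langle B^i,B^j\rangle$-integral pathwise by $2G(\cdot)\,dr$, invoke (H2) so that the total $dr$-integrand is nonpositive, and then take $\hat{\mathbb E}_t$ using that the $dB$-integral is a symmetric $G$-martingale. Two minor remarks: (a) you carry out the multi-dimensional reduction via the Cauchy--Schwarz matrix inequality $|\hat X|^{-2}((\Delta\sigma)^T\hat X)((\Delta\sigma)^T\hat X)^T\le(\Delta\sigma)^T\Delta\sigma$, whereas the paper simply treats $n=d=1$ and asserts the general case is analogous---your extra step is exactly the missing ingredient; (b) the paper explicitly checks, via Lemma~\ref{proA.1}, that the $dB$-integrand lies in $M_G^2$ so that the stochastic integral is a genuine $G$-martingale, a verification you take for granted but should include in a full write-up.
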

\begin{proof}
To simplify presentation, we shall prove only the case when $n=d=1$, as the
higher dimensional case can be treated in the same way without
difficulty. Set
$C_{s}:=\exp(2p\eta(s-t))$. Applying the $G$-It\^{o} formula yields that
\begin{align*}
&  C_{s}(X_{s}^{t,\xi}-X_{s}^{t,{\xi}^{\prime}})^{2p}-|\xi-{\xi}^{\prime
}|^{2p}\\
&  =2p\eta \int_{t}^{s}C_{r}(X_{r}^{t,\xi}-X_{r}^{t,{\xi}^{\prime}}%
)^{2p}dr+2p\int_{t}^{s}C_{r}(X_{r}^{t,\xi}-X_{r}^{t,{\xi}^{\prime}}%
)^{2p-1}(b(X_{r}^{t,\xi})-b(X_{r}^{t,{\xi}^{\prime}}))dr\\
&  \  \ +p\int_{t}^{s}\xi_{r}d\langle B\rangle_{r}+2p\int_{t}^{s}C_{r}%
(X_{r}^{t,\xi}-X_{r}^{t,{\xi}^{\prime}})^{2p-1}(\sigma(X_{r}^{t,\xi}%
)-\sigma(X_{r}^{t,{\xi}^{\prime}}))dB_{r}\\
&  =2p\eta \int_{t}^{s}C_{r}(X_{r}^{t,\xi}-X_{r}^{t,{\xi}^{\prime}}%
)^{2p}dr+2p\int_{t}^{s}C_{r}(X_{r}^{t,\xi}-X_{r}^{t,{\xi}^{\prime}}%
)^{2p-1}(b(X_{r}^{t,\xi})-b(X_{r}^{t,{\xi}^{\prime}}))dr\\
&  \  \ +2p\int_{t}^{s}G(\xi_{r})dr+2p\int_{t}^{s}C_{r}(X_{r}^{t,\xi}%
-X_{r}^{t,{\xi}^{\prime}})^{2p-1}(\sigma(X_{r}^{t,\xi})-\sigma(X_{r}^{t,{\xi
}^{\prime}}))dB_{r}\\
&  \  \ +p\int_{t}^{s}\xi_{r}d\langle B\rangle_{r}-2p\int_{t}^{s}G(\xi_{r})dr,
\end{align*}
where
\[
\xi_{r}=C_{r}(X_{r}^{t,\xi}-X_{r}^{t,{\xi}^{\prime}})^{2p-2}((2p-1)|\sigma
(X_{r}^{t,\xi})-\sigma(X_{r}^{t,{\xi}^{\prime}})|^{2}+2(X_{r}^{t,\xi}%
-X_{r}^{t,{\xi}^{\prime}})(h(X_{r}^{t,\xi})-h(X_{r}^{t,{\xi}^{\prime}}))).
\]
Note that $\int_{t}^{s}\xi_{r}d\langle B\rangle_{r}-2\int_{t}^{s}G(\xi
_{r})dr\leq0$ and {(H2)}, then we obtain%
\begin{align}\label{myq1}
C_{s}(X_{s}^{t,\xi}-X_{s}^{t,{\xi}^{\prime}})^{2p}-|\xi-{\xi}^{\prime}%
|^{2p}\leq2p\int_{t}^{s}C_{r}(X_{r}^{t,\xi}-X_{r}^{t,{\xi}^{\prime}}%
)^{2p-1}(\sigma(X_{r}^{t,\xi})-\sigma(X_{r}^{t,{\xi}^{\prime}}))dB_{r}.
\end{align}
On the other hand, by Lemma \ref{proA.1},%
\[
\hat{\mathbb{E}}[(\int_{t}^{T}|X_{r}^{t,\xi}-X_{r}^{t,{\xi}^{\prime}}%
|^{4p}dr)^{1/2}]\leq \sqrt{T}\hat{\mathbb{E}}[\sup_{r\in \lbrack t,T]}%
|X_{r}^{t,\xi}-X_{r}^{t,{\xi}^{\prime}}|^{2p}]\leq \sqrt{T}C^{\prime}%
\hat{\mathbb{E}}[|\xi-\xi^{\prime}|^{2p}].
\]
Then the right sides of inequality \eqref{myq1} is a $G$-martingale.
Thus we conclude that
\[
\mathbb{\hat{E}}_{t}[C_{s}|X_{s}^{t,\xi}-X_{s}^{t,{\xi}^{\prime}}|^{2p}%
]\leq|\xi-{\xi}^{\prime}|^{2p}.
\]
Consequently,
\[
\mathbb{\hat{E}}_{t}[|X_{s}^{t,\xi}-X_{s}^{t,{\xi}^{\prime}}|^{2p}]\leq
\exp(-2p\eta(s-t))|\xi-{\xi}^{\prime}|^{2p}.
\]
By a similar analysis as in of Lemma 4.1 of \cite{HW}, we can also obtain the
second inequality holds, which completes the proof.
\end{proof}

\begin{theorem}\label{HW3}
Assume \emph{(H1)} and \emph{(H2)} hold. Then for each  $f\in C_{2p,Lip}(\mathbb{R}^n)$, there exists a constant $\bar{\lambda}^f$ such that
\[
\lim\limits_{t\rightarrow\infty}\hat{\mathbb{E}}[f(X^{x}_{t})]=\bar{\lambda}^f, \ \ \forall x\in\mathbb{R}^n.
\]
In particular,  for each $t$, there exists a constant $C_1$ depending on $G$, $\eta, L, K_f, n$ and $p$ such that
\[
|\bar{\lambda}^f-\hat{\mathbb{E}}[f(X^{x}_{t})]|\leq {C_1}(1+|x|^{2p})\exp(-\eta t).
\]
\end{theorem}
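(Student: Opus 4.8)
The plan is to study the map $u(t,x):=\hat{\mathbb{E}}[f(X^x_t)]$ and to show that, for each fixed $x$, the family $\{u(t,x)\}_{t\ge0}$ is Cauchy as $t\to\infty$ with an exponential rate, that its limit does not depend on $x$, and then to read off the quantitative estimate by letting the time increment tend to infinity. The two engines of the argument are exactly the two bounds of Lemma \ref{HW2}: the exponential contraction (i) between two solutions started from different data, and the uniform-in-time moment bound (ii). The only additional structural input is the flow (Markov) property of the time-homogeneous $G$-SDE \eqref{App1}.

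First I would show that $u(t,\cdot)$ is locally Lipschitz with a factor decaying in $t$. By sub-additivity and monotonicity of $\hat{\mathbb{E}}$ one has
\[
|u(t,x)-u(t,x')|\le \hat{\mathbb{E}}\big[|f(X^x_t)-f(X^{x'}_t)|\big],
\]
and since $f\in C_{2p,Lip}(\mathbb{R}^n)$ the integrand is bounded by $K_f(1+|X^x_t|^{2p-1}+|X^{x'}_t|^{2p-1})|X^x_t-X^{x'}_t|$. Applying H\"older's inequality with the conjugate exponents $\tfrac{2p}{2p-1}$ and $2p$, the moment bound Lemma \ref{HW2}(ii) controls the first factor by $C(1+|x|^{2p-1}+|x'|^{2p-1})$, while the contraction bound Lemma \ref{HW2}(i) controls the second factor by $\exp(-\eta t)|x-x'|$. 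This yields
\[
|u(t,x)-u(t,x')|\le CK_f(1+|x|^{2p-1}+|x'|^{2p-1})|x-x'|\exp(-\eta t),
\]
which in particular forces the limits $\lim_t u(t,x)$ and $\lim_t u(t,x')$ to coincide once existence is known, so $\bar\lambda^f$ will be independent of $x$.

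Next I would prove the Cauchy property in $t$. Using the flow identity $X^{0,x}_{t+s}=X^{s,X^x_s}_{t+s}$ together with the time-homogeneous Markov property of the $G$-SDE, one gets $\hat{\mathbb{E}}_s[f(X^{s,\xi}_{t+s})]\big|_{\xi=y}=u(t,y)$, hence by the tower property $\hat{\mathbb{E}}[\hat{\mathbb{E}}_s[\cdot]]=\hat{\mathbb{E}}[\cdot]$,
\[
u(t+s,x)=\hat{\mathbb{E}}[u(t,X^x_s)].
\]
Since $u(t,x)$ is a constant, constant preservation gives $u(t,x)=\hat{\mathbb{E}}[u(t,x)]$, so sub-additivity yields $|u(t+s,x)-u(t,x)|\le \hat{\mathbb{E}}[|u(t,X^x_s)-u(t,x)|]$. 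Inserting the Lipschitz-type estimate above and using H\"older once more together with the uniform-in-$s$ moment bounds $\hat{\mathbb{E}}[|X^x_s|^{2p}]\le C(1+|x|^{2p})$ and $\hat{\mathbb{E}}[|X^x_s-x|^{2p}]\le C(1+|x|^{2p})$ produces
\[
|u(t+s,x)-u(t,x)|\le C_1(1+|x|^{2p})\exp(-\eta t),
\]
with a bound independent of $s$. This shows $\{u(t,x)\}_t$ is Cauchy, so $\bar\lambda^f:=\lim_{t\to\infty}u(t,x)$ exists; by the previous step it is the same for every $x$; and letting $s\to\infty$ in the last display gives exactly the claimed exponential estimate for $|\bar\lambda^f-\hat{\mathbb{E}}[f(X^x_t)]|$.

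The hard part will be justifying the nonlinear Markov identity $u(t+s,x)=\hat{\mathbb{E}}[u(t,X^x_s)]$ rigorously. Because $\hat{\mathbb{E}}$ is only sublinear, conditioning is a nonlinear operation, so one must invoke the independence of the increments of $G$-Brownian motion after time $s$ from the information up to time $s$, together with the time-shift invariance of the coefficients, to identify $\hat{\mathbb{E}}_s[f(X^{s,\xi}_{t+s})]$ at a deterministic freezing $\xi=y$ with $u(t,y)$, and then to substitute the random datum $\xi=X^x_s$; this substitution uses the regularity of $u(t,\cdot)$ established in the first step (so that $u(t,X^x_s)$ lies in $L^{2p}_G$) and the corresponding property of $G$-diffusions from Peng \cite{P10}. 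A secondary technical point is that $u(t,\cdot)$ grows polynomially rather than being globally Lipschitz, so throughout one must pair the Lipschitz-type bound with the moment estimate Lemma \ref{HW2}(ii) via H\"older in order to keep all constants uniform in $s$.
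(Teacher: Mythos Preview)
Your proposal is correct and follows essentially the same route as the paper: both arguments combine the flow/Markov property of the time-homogeneous $G$-SDE with Lemma~\ref{HW2}(i)--(ii) via H\"older's inequality with exponents $\tfrac{2p}{2p-1}$ and $2p$ to obtain the bound $C_1(1+|x|^{2p})e^{-\eta t}$ on time increments, and then deduce independence of the limit from $x$ by the same contraction. The only cosmetic difference is that the paper compares $u(t,x)$ and $u(t',x)$ directly by writing both as expectations of $f$ applied to processes restarted at time $t-t'$ from $X^x_{t-t'}$ versus $x$ (so the contraction is applied at the level of $X$), whereas you first isolate the spatial Lipschitz estimate for $u(t,\cdot)$ and then feed it through the semigroup identity $u(t+s,x)=\hat{\mathbb{E}}[u(t,X^x_s)]$; the paper's packaging sidesteps your ``hard part'' of justifying this identity here, but invokes exactly that identity (citing Lemma~A.3 of \cite{HW}) later in Theorem~\ref{HW5}.
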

\begin{proof}
For a fixed $x$ and each $f\in C_{2p,Lip}(\mathbb{R}^n)$,
 from Lemma \ref{HW2}, we can find some constant $\bar{C}$ depending on $C$ and $K_f$ such that
\[\hat{\mathbb{E}}[|f(X^x_t)|]\leq |f(0)|+\bar{C}\hat{\mathbb{E}}[|X^x_t|^{2p}]\leq \bar{C}(1+|x|^{2p}).
\]
Then there exists a sequence $T_n\rightarrow\infty$  such that
$\hat{\mathbb{E}}[f(X^x_{T_n})]\rightarrow \bar{\lambda}^f$  for some constant $\bar{\lambda}^f$.
From the uniqueness of solutions to $G$-SDEs, we obtain $X^x_s=X^{t,X^x_t}_s$ with $s\geq t$.
Note that $\hat{\mathbb{E}}[f(X^{x}_{t^{\prime}})]=\hat{\mathbb{E}}[f(X^{t-t^{\prime},x}_t)]$ for each $t$ and $t^{\prime}$ with $t^{\prime}\leq t$, then we have
\begin{align*}
|\hat{\mathbb{E}}[f(X^x_t)]-\hat{\mathbb{E}}[f(X^{x}_{t^{\prime}})]|=&|\hat{\mathbb{E}}[f(X^{t-t^{\prime},X^x_{t-t^{\prime}}}_t)]-\hat{\mathbb{E}}[f(X^{t-t^{\prime},x}_t)]|
\\ \leq & K_f \hat{\mathbb{E}}[(1+|X^{t-t^{\prime},X^x_{t-t^{\prime}}}_t|^{2p-1}+|X^{t-t^{\prime},x}_t|^{2p-1})|X^{t-t^{\prime},X^x_{t-t^{\prime}}}_t-X^{t-t^{\prime},x}_t|].
\end{align*}
Applying H\"{o}lder's inequality and Lemma \ref{HW2}, we obtain that
 \begin{align*}
|\hat{\mathbb{E}}[f(X^x_t)]-\hat{\mathbb{E}}[f(X^{x}_{t^{\prime}})]|&\leq K_f \hat{\mathbb{E}}[(1+|X^{t-t^{\prime},X^x_{t-t^{\prime}}}_t|^{2p-1}+|X^{t-t^{\prime},x}_t|^{2p-1})^{\frac{2p}{2p-1}}]^{\frac{2p-1}{2p}}\hat{\mathbb{E}}[|X^{t-t^{\prime},X^x_{t-t^{\prime}}}_t-X^{t-t^{\prime},x}_t|^{2p}]^{\frac{1}{2p}}
\\ &\leq C_1 \hat{\mathbb{E}}[1+|X^{t-t^{\prime},X^x_{t-t^{\prime}}}_t|^{2p}+|X^{t-t^{\prime},x}_t|^{2p}]^{\frac{2p-1}{2p}}\hat{\mathbb{E}}[|X^x_{t-t^{\prime}}|^{2p}+|x|^{2p}]^{\frac{1}{2p}}\exp(-\eta t^{\prime})\\
& \leq C_1 (1+|x|^{2p})^{\frac{1}{2p}}\hat{\mathbb{E}}[\hat{\mathbb{E}}_{t-t^{\prime}}[1+|X^{t-t^{\prime},X^x_{t-t^{\prime}}}_t|^{2p}+|X^{t-t^{\prime},x}_t|^{2p}]]^{\frac{2p-1}{2p}}\exp(-\eta t^{\prime})
\\ &\leq C_1 (1+|x|^{2p})^{\frac{1}{2p}}\hat{\mathbb{E}}[1+|X^x_{t-t^{\prime}}|^{2p}+|x|^{2p}]^{\frac{2p-1}{2p}}\exp(-\eta t^{\prime})
\\& \leq C_1 (1+|x|^{2p})\exp(-\eta t^{\prime}),
\end{align*}
where the constant $C_1$  depending on $p$ and $G,\eta, n, L, K_f$ is  vary from line to line.

Consequently, for each $t$, we get
\[
|\bar{\lambda}^f-\hat{\mathbb{E}}[f(X^{x}_{t})]|=\lim\limits_{n\rightarrow\infty}|\hat{\mathbb{E}}[f(X^x_{T_n})]-\hat{\mathbb{E}}[f(X^{x}_{t})]|\leq {C_1}(1+|x|^{2p})\exp(-\eta t),
\]
which derives that
\[
\bar{\lambda}^f=\lim\limits_{t\rightarrow\infty}\hat{\mathbb{E}}[f(X^{x}_{t})].
\]
For each $x,x^{\prime}\in\mathbb{R}^n$, applying Lemma  \ref{HW2} (i) yields that
\begin{align*}
\lim\limits_{t\rightarrow\infty}|\hat{\mathbb{E}}[f(X^{x}_{t})]-\hat{\mathbb{E}}[f(X^{x^{\prime}}_{t})]|\leq &
\lim\limits_{t\rightarrow\infty}\hat{\mathbb{E}}[|f(X^{x}_{t})-f(X^{x^{\prime}}_{t})|]\\
\leq & K_f
\lim\limits_{t\rightarrow\infty}\hat{\mathbb{E}}[(1+|X^x_t|^{2p-1}+|X^{x^{\prime}}_t|^{2p-1})|X^{x}_{t}-X^{x^{\prime}}_{t}|]\\
\leq &
K_f \lim\limits_{t\rightarrow\infty}\hat{\mathbb{E}}[(1+|X^{x}_t|^{2p-1}+|X^{x^{\prime}}_t|^{2p-1})^{\frac{2p}{2p-1}}]^{\frac{2p-1}{2p}}\hat{\mathbb{E}}[|X^{x}_t-X^{x^{\prime}}_t|^{2p}]^{\frac{1}{2p}}\\
\leq & C_1 \lim\limits_{t\rightarrow\infty}(1+|x|^{2p}+|x^{\prime}|^{2p})\exp(-\eta t)=0,
\end{align*}
which completes the proof.
\end{proof}

The following result is a direct consequence of Theorem \ref{HW3}.
\begin{corollary}\label{HW8}
For each $f\in  C_{2p,Lip}(\mathbb{R}^n)$, we get
\[
\lim\limits_{T\rightarrow\infty}\frac{1}{T}\int^T_0\mathbb{\hat{E}}[f(X^x_t)]dt=\bar{\lambda}^f, \ \ \forall x\in\mathbb{R}^n.
\]
\end{corollary}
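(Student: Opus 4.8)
The plan is to derive the statement directly from the pointwise exponential convergence established in Theorem \ref{HW3}, exploiting the elementary fact that the Cesàro (time) average of a function converging to a limit converges to the same limit. Since Theorem \ref{HW3} supplies not merely convergence but an explicit rate, the argument becomes quantitative and avoids any $\varepsilon$-splitting.

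First I would observe that, for fixed $x$, the map $t\mapsto\hat{\mathbb{E}}[f(X^x_t)]$ is bounded on $[0,\infty)$ by $\bar{C}(1+|x|^{2p})$ (exactly the bound recorded at the start of the proof of Theorem \ref{HW3}) and is continuous in $t$, the latter following from Lemma \ref{proA.1}(iii) together with the local Lipschitz estimate for $f$ and H\"older's inequality. Hence the integral $\int_{0}^{T}\hat{\mathbb{E}}[f(X^x_t)]\,dt$ is well defined for every $T>0$, and the left-hand side of the claimed identity makes sense.

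Next, since $\bar{\lambda}^f$ is constant, I would rewrite the deviation as a single time average and move the absolute value inside:
\[
\left|\frac{1}{T}\int_{0}^{T}\hat{\mathbb{E}}[f(X^x_t)]\,dt-\bar{\lambda}^f\right|
=\left|\frac{1}{T}\int_{0}^{T}\bigl(\hat{\mathbb{E}}[f(X^x_t)]-\bar{\lambda}^f\bigr)\,dt\right|
\leq\frac{1}{T}\int_{0}^{T}\bigl|\hat{\mathbb{E}}[f(X^x_t)]-\bar{\lambda}^f\bigr|\,dt.
\]
Inserting the quantitative bound $|\bar{\lambda}^f-\hat{\mathbb{E}}[f(X^{x}_{t})]|\leq C_1(1+|x|^{2p})\exp(-\eta t)$ from Theorem \ref{HW3} then gives
\[
\frac{1}{T}\int_{0}^{T}\bigl|\hat{\mathbb{E}}[f(X^x_t)]-\bar{\lambda}^f\bigr|\,dt
\leq\frac{C_1(1+|x|^{2p})}{T}\int_{0}^{T}\exp(-\eta t)\,dt
=\frac{C_1(1+|x|^{2p})}{\eta T}\bigl(1-\exp(-\eta T)\bigr)
\leq\frac{C_1(1+|x|^{2p})}{\eta T}.
\]
Letting $T\to\infty$, the right-hand side tends to $0$ for every fixed $x\in\mathbb{R}^n$, which is exactly the assertion.

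I do not expect any serious obstacle: the exponential rate in Theorem \ref{HW3} makes the convergence of the time average immediate, and even of order $1/T$. The only point meriting a moment's care is the measurability (indeed integrability) of $t\mapsto\hat{\mathbb{E}}[f(X^x_t)]$, which I would dispatch via the boundedness and continuity noted above. Were the explicit rate unavailable, one would instead fall back on the standard Cesàro argument, splitting $[0,T]$ at a large threshold beyond which the integrand is uniformly close to $\bar{\lambda}^f$ and letting $T\to\infty$; but here the rate renders that refinement unnecessary.
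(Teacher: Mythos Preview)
Your proof is correct and is precisely the approach the paper has in mind: the paper gives no separate proof, stating only that the corollary is a direct consequence of Theorem~\ref{HW3}. Your quantitative Ces\`aro estimate via the exponential rate is the natural way to make that ``direct consequence'' explicit.
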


From the nonlinear Feynman-Kac formula in \cite{P10},  we obtain $u^f(t,x)=\hat{\mathbb{E}}[f(X^x_t)]$ is the unique viscosity solution to the following fully nonlinear PDE.
 \begin{align}\label{W1}
 \begin{cases}
&\partial_tu^f-G(H(D_{x}^{2}u^f,D_{x}u^f,x))-\langle
b(x),D_{x}u^f\rangle=0,\ (t,x)\in(0,\infty)\times\mathbb{R}^n,\\
& u^f(0,x)=f(x).
\end{cases}
\end{align}
where
\begin{align*}
H_{ij}(D_{x}^{2}u^f,D_{x}u^f,x)=   \langle D_{x}^{2}u^f\sigma_{i}%
(x),\sigma_{j}(x)\rangle+2\langle D_{x}u^f,h_{ij}(x)\rangle.
\end{align*}
Then by Lemma \ref{HW3}, we  get the following large time behaviour of  solution to fully nonlinear parabolic  PDE \eqref{W1}.
\begin{corollary}
For each $f\in  C_{2p,Lip}(\mathbb{R}^n)$, we have for any $x\in\mathbb{R}^n$,
\[
\lim\limits_{T\rightarrow\infty}u^f(T,x)=\bar{\lambda}^f \ \text {and} \ |u^f(T,x)-\bar{\lambda}^f|\leq C_1 (1+|x|^{2p})\exp(-\eta T).
\]
\end{corollary}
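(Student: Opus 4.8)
The plan is to exploit the probabilistic representation of $u^f$ supplied by the nonlinear Feynman-Kac formula, which reduces the corollary to an almost verbatim transcription of Theorem \ref{HW3}. Indeed, immediately preceding the statement we have recorded that $u^f(T,x)=\hat{\mathbb{E}}[f(X^x_T)]$ is the unique viscosity solution of the fully nonlinear PDE \eqref{W1}. Thus the time variable $T$ of the PDE solution and the terminal time of the $G$-diffusion are literally the same object, and every quantitative statement about the map $t\mapsto\hat{\mathbb{E}}[f(X^x_t)]$ transfers directly to $T\mapsto u^f(T,x)$.

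First I would substitute the representation $u^f(T,x)=\hat{\mathbb{E}}[f(X^x_T)]$ into the conclusion of Theorem \ref{HW3}. The first assertion there, namely $\lim_{t\rightarrow\infty}\hat{\mathbb{E}}[f(X^x_t)]=\bar{\lambda}^f$ for every $x$, becomes at once $\lim_{T\rightarrow\infty}u^f(T,x)=\bar{\lambda}^f$. Second, the explicit rate $|\bar{\lambda}^f-\hat{\mathbb{E}}[f(X^x_t)]|\leq C_1(1+|x|^{2p})\exp(-\eta t)$, valid for each fixed $t$ with $C_1$ depending only on $G,\eta,L,K_f,n,p$, yields upon renaming $t$ as $T$ the desired bound $|u^f(T,x)-\bar{\lambda}^f|\leq C_1(1+|x|^{2p})\exp(-\eta T)$. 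Since the constant $C_1$ inherited from Theorem \ref{HW3} is independent of $T$, no additional uniformity argument is needed.

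The only point that deserves care — and the closest thing to an obstacle — is the legitimacy of the Feynman-Kac representation for a datum $f\in C_{2p,Lip}(\mathbb{R}^n)$ of polynomial (rather than bounded or globally Lipschitz) growth, since this is what guarantees that $\hat{\mathbb{E}}[f(X^x_t)]$ is finite and well defined. This is underwritten by Lemma \ref{HW2}(ii), which furnishes the moment bound $\hat{\mathbb{E}}[|X^x_t|^{2p}]\leq C(1+|x|^{2p})$ uniformly in $t$; combined with the growth estimate $|f(x)|\leq|f(0)|+K_f(1+|x|^{2p-1})|x|$ this shows $f(X^x_t)\in L^1_G(\Omega_t)$, so that both the representation and the corollary are meaningful. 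Granting the identification already stated in the text, the remainder is immediate and requires no further estimation.
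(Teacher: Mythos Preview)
Your proposal is correct and matches the paper's approach exactly: the paper does not even write out a separate proof for this corollary, stating it as an immediate consequence of Theorem~\ref{HW3} via the Feynman--Kac identification $u^f(T,x)=\hat{\mathbb{E}}[f(X^x_T)]$ recorded just before the statement. Your added remark justifying integrability through Lemma~\ref{HW2}(ii) is a harmless clarification of a point the paper takes for granted.
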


We define the function $\bar{\Lambda}: C_{2p,Lip}(\mathbb{R}^n)\mapsto\mathbb{R}$ by \[\bar{\Lambda}[f]=\bar{\lambda}^f.\]
\begin{lemma}
Assume \emph{(H1)} and \emph{(H2)} hold. Then  $\bar{\Lambda}$ is a sublinear expectation on $(\mathbb{R}^n, C_{2p,Lip}(\mathbb{R}^n))$, i.e.,
\begin{description}
\item[(a)] If $f_1\geq f_2$, then $\bar{\Lambda}[f_1]\geq
\bar{\Lambda}[f_2]$;
\item[(b)]  $\bar{\Lambda}[c]=c$ for any constant $c$;
\item[(c)]  $\bar{\Lambda}[f_1+f_2]\leq\bar{\Lambda}%
[f_1]+\bar{\Lambda}[f_2]$;
\item[(d)] $\bar{\Lambda}[\lambda f]=\lambda
\bar{\Lambda}[f]$ for each $\lambda\geq0$.
\end{description}
\end{lemma}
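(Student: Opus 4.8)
The plan is to transfer each of the four defining properties directly from the sublinear expectation $\hat{\mathbb{E}}$ to its long-time limit $\bar{\Lambda}$. The crucial input, already established in Theorem \ref{HW3}, is that for every $f\in C_{2p,Lip}(\mathbb{R}^n)$ the limit $\bar{\lambda}^f=\lim_{t\to\infty}\hat{\mathbb{E}}[f(X^x_t)]$ exists and is independent of the choice of $x\in\mathbb{R}^n$. Before using it, I would record that $C_{2p,Lip}(\mathbb{R}^n)$ is stable under the operations appearing in the statement: the growth condition defining the class is preserved under addition (the constants $K_f$ simply add) and under multiplication by a nonnegative scalar, and every constant function lies in the class. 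Hence $f_1+f_2$, $\lambda f$ and the constants $c$ all belong to $C_{2p,Lip}(\mathbb{R}^n)$, so $\bar{\Lambda}$ is genuinely defined on all the quantities that occur in (a)--(d).

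With this in hand, I would fix an arbitrary $x$, evaluate $\hat{\mathbb{E}}[\,\cdot\,(X^x_t)]$ while $t$ is finite, and only then pass to the limit $t\to\infty$. For (a), if $f_1\geq f_2$ pointwise then $f_1(X^x_t)\geq f_2(X^x_t)$ quasi-surely, so monotonicity of $\hat{\mathbb{E}}$ gives $\hat{\mathbb{E}}[f_1(X^x_t)]\geq\hat{\mathbb{E}}[f_2(X^x_t)]$; letting $t\to\infty$ yields $\bar{\Lambda}[f_1]\geq\bar{\Lambda}[f_2]$. For (b), a constant $c$ satisfies $c(X^x_t)=c$, so constant preservation of $\hat{\mathbb{E}}$ gives $\hat{\mathbb{E}}[c]=c$ for every $t$, hence $\bar{\Lambda}[c]=c$. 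For (c), sub-additivity of $\hat{\mathbb{E}}$ gives $\hat{\mathbb{E}}[(f_1+f_2)(X^x_t)]\leq\hat{\mathbb{E}}[f_1(X^x_t)]+\hat{\mathbb{E}}[f_2(X^x_t)]$, and taking limits on both sides produces $\bar{\Lambda}[f_1+f_2]\leq\bar{\Lambda}[f_1]+\bar{\Lambda}[f_2]$. For (d), positive homogeneity of $\hat{\mathbb{E}}$ gives $\hat{\mathbb{E}}[(\lambda f)(X^x_t)]=\lambda\hat{\mathbb{E}}[f(X^x_t)]$ for $\lambda\geq0$, and the limit gives $\bar{\Lambda}[\lambda f]=\lambda\bar{\Lambda}[f]$.

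I do not anticipate a serious obstacle: the entire analytic content sits in the prior existence-of-limit result, Theorem \ref{HW3}, after which the four axioms follow from the corresponding axioms for $\hat{\mathbb{E}}$ by the elementary fact that pointwise inequalities and algebraic identities survive passage to a limit. The only point requiring a word of care is in (c), where one splits the limit of a sum into the sum of two limits; this is legitimate because Theorem \ref{HW3} guarantees that the limits of the two summands exist separately and are finite, so there is no indeterminate form to worry about. Thus the lemma reduces to these four one-line verifications.
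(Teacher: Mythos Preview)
Your proposal is correct and follows the same approach as the paper, which simply remarks that the result is immediate from Theorem \ref{HW3} and the defining properties of $\hat{\mathbb{E}}$; you have merely spelled out explicitly the four one-line verifications that the paper leaves to the reader.
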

\begin{proof}
The proof is immediate from Theorem \ref{HW3} and the definition of  $G$-expectation.
\end{proof}
\begin{lemma}\label{newds}
For each sequence
$\{f _{i}\}_{i=1}^{\infty }\subset C_{2p-1,Lip}(\mathbb{R}^{n})$
satisfying $f_{i}\downarrow 0$, we have
$\bar{\Lambda}[f_{i}]\downarrow 0$.
\end{lemma}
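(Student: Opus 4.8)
The plan is to exploit that $\bar{\Lambda}$ inherits monotonicity and subadditivity from the preceding lemma, together with the uniform-in-time moment bound of Lemma \ref{HW2}(ii). First I would observe that $f_i\downarrow 0$ forces each $f_i\ge 0$ and $0\le f_i\le f_1$; since $C_{2p-1,Lip}(\mathbb{R}^n)\subset C_{2p,Lip}(\mathbb{R}^n)$, each $\bar{\Lambda}[f_i]$ is well defined, and by monotonicity the sequence $\bar{\Lambda}[f_i]$ is nonincreasing and bounded below by $\bar{\Lambda}[0]=0$. Hence $\lim_i\bar{\Lambda}[f_i]$ exists and is nonnegative, and it remains only to bound it above by $0$. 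I would also record that, as $f_1\in C_{2p-1,Lip}$, there is a constant $C_{f_1}$ with $0\le f_i(y)\le C_{f_1}(1+|y|^{2p-1})$ for all $i$ and $y$.

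The core of the argument is a truncation that separates the behaviour of $f_i$ on a large ball from its tail. Fix $R\ge 1$ and a Lipschitz cutoff $\psi_R:\mathbb{R}^n\to[0,1]$ equal to $0$ on $\{|y|\le R\}$ and to $1$ on $\{|y|\ge 2R\}$. Splitting $f_i=f_i(1-\psi_R)+f_i\psi_R$ and using subadditivity of $\hat{\mathbb{E}}$, the near part $f_i(1-\psi_R)$ is supported in the ball of radius $2R$ and bounded by $\varepsilon_i(R):=\sup_{|y|\le 2R}f_i(y)$, while on the tail I would use $1+|y|^{2p-1}\le \frac{2}{R}|y|^{2p}$ for $|y|\ge R\ge 1$ to get $f_i(y)\psi_R(y)\le \frac{2C_{f_1}}{R}|y|^{2p}$. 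Applying Lemma \ref{HW2}(ii) then yields, for every $t\ge 0$,
\[
\hat{\mathbb{E}}[f_i(X^x_t)]\le \varepsilon_i(R)+\frac{2C_{f_1}}{R}\,\hat{\mathbb{E}}[|X^x_t|^{2p}]\le \varepsilon_i(R)+\frac{C'}{R}(1+|x|^{2p}),
\]
with $C'$ independent of $t$, $i$ and $R$. Letting $t\to\infty$ and recalling that $\bar{\lambda}^{f_i}$ does not depend on $x$ (Theorem \ref{HW3}), I would take $x=0$ to obtain $\bar{\Lambda}[f_i]\le \varepsilon_i(R)+C'/R$.

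To finish, on the compact ball $\{|y|\le 2R\}$ the continuous functions $f_i$ decrease pointwise to $0$, so Dini's theorem gives $\varepsilon_i(R)\to 0$ as $i\to\infty$; taking $\limsup_{i\to\infty}$ leaves $\limsup_i\bar{\Lambda}[f_i]\le C'/R$, and letting $R\to\infty$ forces the limit to be $0$. The step I expect to be the main obstacle is the uniform control of the tail: the bound on $\hat{\mathbb{E}}[f_i(X^x_t)]$ must hold uniformly in $t$ before the limit $t\to\infty$ can be taken, and this is exactly where I rely on the fact, emphasized in Lemma \ref{HW2}(ii), that the moment constant is independent of $t$. The restriction to $C_{2p-1,Lip}$ (growth $2p-1$) rather than $C_{2p,Lip}$ is what provides the extra factor $1/R$ from the $2p$-th moment, and I would need to check that the cutoff is arranged so that $f_i(1-\psi_R)$ and $f_i\psi_R$ remain in $L^1_G$, which holds since both are continuous of polynomial growth and $X^x_t$ has all moments.
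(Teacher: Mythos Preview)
Your argument is correct and follows essentially the same route as the paper's proof: both split $f_i$ into a part supported on a large ball (controlled by $\sup_{|y|\le R}f_i(y)$ and driven to $0$ via Dini's theorem) and a tail part where the $(2p-1)$-growth of $f_1$ against the uniform-in-$t$ $2p$-moment bound of Lemma~\ref{HW2}(ii) yields a factor $1/R$. The only cosmetic difference is that the paper uses the sharp pointwise inequality $f_i(x)\le k_i^N+\frac{f_1(x)|x|}{N}$ (with $k_i^N=\max_{|x|\le N}f_i(x)$) rather than a smooth cutoff, which slightly shortens the bookkeeping but changes nothing structurally.
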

\begin{proof}
For each fixed $N>0$,
\begin{equation*}
f_{i}(x)\leq k_{i}^{N}+f_{1}(x)\mathbf{1}_{[|x|>N]}\leq k_{i}^{N}+\frac{%
f_{1}(x)|x|}{N}\text{\ for every }x\in \mathbb{R}^{n},
\end{equation*}%
where $k_{i}^{N}=\max_{|x|\leq N}f_{i}(x)$.
Then we   have,
\begin{equation*}
\hat{\mathbb{E}}[f_{i}(X^x_t)]\leq k_{i}^{N}+\frac{1}{N}\hat{\mathbb{E}}[f_{1}(X^x_t)|X^x_t|].
\end{equation*}
Applying Lemma \ref{HW2}, there exits a constant $C_1$ depending on $G$, $f_1$, $p, n$ and $\eta$ such that,
$$\hat{\mathbb{E}}[f_{1}(X^x_t)|X^x_t|]\leq \bar{C}\hat{\mathbb{E}}[|f_1(0)X^x_t|+|X^x_t|^{2p}]\leq C_1(1+|x|^{2p}).$$
Consequently,\[
\bar{\Lambda}[f_i]=\lim\limits_{t\rightarrow\infty}\hat{\mathbb{E}}[f_{i}(X^x_t)]\leq k_{i}^{N}+\frac{C_1(1+|x|^{2p})}{N}.
\]
It follows from $f_{i}\downarrow 0$ and Dini's theorem that $k_{i}^{N}\downarrow
0$. Thus we have
$\lim_{i\rightarrow \infty
}\bar{\Lambda}[f_i]\leq
\frac{C_1(1+|x|^{2p})}{N}$. Since $N$ can
be arbitrarily large, we get $\bar{\Lambda}[f_{i}]\downarrow 0$.
\end{proof}
\begin{remark}{\upshape
From the above proof, in general we cannot get this result  for $\{f_i\}_{i=1}^{\infty}\subset C_{2p,Lip}(\mathbb{R}^{n})$.
}
\end{remark}

\begin{theorem}\label{HW4}
Suppose assumptions \emph{(H1)} and \emph{(H2)} hold.
Then there exists  a family of weakly compact  probability measures $\{m_{\theta}\}_{\theta\in \bar{\Theta}}$ defined on $(\mathbb{R}^n, \mathcal{B}(\mathbb{R}^n))$ such that
\[
\bar{\lambda}^f=\sup\limits_{\theta\in \bar{\Theta}}\int_{\mathbb{R}^n}f(x)m_{\theta}(dx), \ \ \forall f\in C_{2p-1,Lip}(\mathbb{R}^n).
\]
\end{theorem}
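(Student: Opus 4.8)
The plan is to realize $\bar{\Lambda}$ as a sublinear expectation to which a representation theorem of Daniell--Stone type applies, thereby producing the family $\{m_\theta\}$. The preceding lemmas have already verified that $\bar\Lambda$ is a sublinear expectation on the function space $C_{2p,Lip}(\mathbb{R}^n)$ (monotonicity, constant preservation, sub-additivity, positive homogeneity), and Lemma \ref{newds} supplies the crucial continuity property: $f_i \downarrow 0$ implies $\bar\Lambda[f_i]\downarrow 0$ for sequences in $C_{2p-1,Lip}(\mathbb{R}^n)$. These are exactly the hypotheses one needs to invoke the standard representation of a sublinear expectation as an upper envelope of a family of linear expectations, each of which extends to a countably additive probability measure.

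First I would fix the domain carefully: work on the vector lattice $\mathcal{H}:=C_{2p-1,Lip}(\mathbb{R}^n)$, on which $\bar\Lambda$ is defined (by restriction from $C_{2p,Lip}$) and is a well-defined sublinear expectation satisfying the downward continuity of Lemma \ref{newds}. The Remark after that lemma explains precisely why the index $2p-1$, rather than $2p$, is forced: the continuity argument uses the bound $\hat{\mathbb{E}}[f_1(X^x_t)|X^x_t|]\le C_1(1+|x|^{2p})$, which requires $f_1$ to grow no faster than order $2p-1$ so that the product stays controlled by the uniform moment estimate of Lemma \ref{HW2}(ii). Second, by the sub-additivity and positive homogeneity, $\bar\Lambda$ is a sublinear functional; a Hahn--Banach type argument produces, for each $f$, a linear functional $\ell\le\bar\Lambda$ with $\ell(f)=\bar\Lambda[f]$, and the collection of all linear functionals dominated by $\bar\Lambda$ forms a set $\bar\Theta$ with $\bar\Lambda[f]=\sup_{\theta\in\bar\Theta}\ell_\theta(f)$. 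Third, for each such $\ell_\theta$, the downward continuity $f_i\downarrow 0\Rightarrow \ell_\theta(f_i)\le\bar\Lambda[f_i]\downarrow 0$ lets me apply the Daniell--Stone theorem to represent $\ell_\theta$ as integration against a genuine (countably additive) probability measure $m_\theta$ on $(\mathbb{R}^n,\mathcal{B}(\mathbb{R}^n))$, yielding $\bar\lambda^f=\sup_{\theta\in\bar\Theta}\int_{\mathbb{R}^n}f\,dm_\theta$.

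Two technical points deserve care. The Daniell--Stone theorem is usually stated for positive linear functionals on a lattice of bounded functions, or with a normalization condition; here the test functions are unbounded (polynomial growth of order $2p-1$), so I would either truncate and pass to the limit using the uniform moment bound to control tails, or apply a version of the representation theorem adapted to Stone lattices containing the constants, using $\ell_\theta(1)=\bar\Lambda[1]=1$ to guarantee that $m_\theta$ is a probability measure rather than a sub-probability. The weak compactness of the family $\{m_\theta\}_{\theta\in\bar\Theta}$ should follow from a tightness argument: the uniform moment estimate $\sup_\theta\int |x|^{2p}\,m_\theta(dx)\le \bar\Lambda[|\cdot|^{2p}]<\infty$ (which is finite by Lemma \ref{HW2}(ii) after passing to the limit) gives tightness via Chebyshev, and Prokhorov's theorem then yields relative compactness; closedness under weak limits identifies the compact representing set.

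The main obstacle I anticipate is the unboundedness of the test functions together with the restriction to $C_{2p-1,Lip}$. Daniell--Stone in its clean form wants a lattice on which monotone limits are controlled, and the gap between the $2p$-growth allowed for $f$ in the definition of $\bar\Lambda$ and the $2p-1$-growth needed for the continuity in Lemma \ref{newds} means I cannot blindly apply the representation on the larger space. The delicate step is therefore to first build the measures $m_\theta$ from the restriction to $C_{2p-1,Lip}$, and then argue that the representation formula $\sup_\theta\int f\,dm_\theta$ is stated (and only claimed) for $f\in C_{2p-1,Lip}$, matching exactly the conclusion of the theorem. Reconciling the domain of definition with the domain of representation, and ensuring the integrals $\int f\,dm_\theta$ converge uniformly enough in $\theta$ for the supremum to reproduce $\bar\lambda^f$, is where the real work lies.
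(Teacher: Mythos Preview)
Your proposal follows essentially the same route as the paper: represent the sublinear expectation $\bar\Lambda$ on $C_{2p-1,Lip}(\mathbb{R}^n)$ as a supremum of dominated linear functionals (Hahn--Banach/representation theorem for sublinear expectations), push each linear functional to a probability measure via Daniell--Stone using the downward continuity of Lemma~\ref{newds}, and then establish weak compactness of the representing family. The structure and the key inputs are identical.

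Two small points where the paper's execution differs from yours are worth noting. First, for tightness you propose bounding $\sup_\theta\int|x|^{2p}\,dm_\theta$ by $\bar\Lambda[|\cdot|^{2p}]$, but $|x|^{2p}\notin C_{2p-1,Lip}(\mathbb{R}^n)$, so the representation does not directly give that inequality; the paper instead takes the bounded Lipschitz functions $f_i(x)=(|x|-i)^+\wedge 1\in C_{2p-1,Lip}$, notes $f_i\downarrow 0$, and applies Lemma~\ref{newds} to get $\sup_\theta m_\theta(\{|x|\ge i+1\})\le\bar\Lambda[f_i]\downarrow 0$ directly. Second, closedness of the representing set under weak limits is not automatic because the test functions are unbounded; the paper handles this by truncating $f$ to $(f\wedge N)\vee(-M)$, passing to the weak limit on the bounded truncation, and then removing the truncation via Lemma~\ref{newds} and monotone convergence. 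Both are exactly the ``technical points'' you flagged, and the paper's devices resolve them cleanly.
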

\begin{proof}
By the representation theorem (Theorem 2.1 of  Chapter 1 in \cite{P10}),
for the sublinear expectation $\bar{\Lambda}[f]$ defined on
$(\mathbb{R}^{n},C_{2p-1,Lip}(\mathbb{R}^{n}))$, there exists a family
of linear expectations $\{M_{\theta }\}_{\theta \in \hat{\Theta} }$ on
$(\mathbb{R}^{n},C_{2p-1,Lip}(\mathbb{R}^{n}))$ such that
\begin{equation*}
\bar{\Lambda}[f ]=\sup_{\theta \in \hat{\Theta} }M_{\theta
}[f ],\ \ \forall f \in C_{2p-1,Lip}(\mathbb{R}^{n}).
\end{equation*}%
By Lemma \ref{newds}, for each sequence $\{f _{i}\}_{i=1}^{\infty }$ in $%
C_{2p-1,Lip}(\mathbb{R}^{n})$ such that $f_{i}\downarrow 0$ on $\mathbb{R%
}^{n}$, we have $\bar{\Lambda}[f_{i}]\downarrow 0$. Thus $M_{\theta
}[f_{i}]\downarrow 0$ for each $\theta \in \hat{\Theta}$. It
follows from the Daniell-Stone Theorem that, for
each $\theta \in \hat{\Theta} $, there exists a unique probability measure $%
m_{\theta }(\cdot )$ on $(\mathbb{R}^{n},\sigma (C_{2p-1,Lip}(\mathbb{R}^{n}))=(%
\mathbb{R}^{n},\mathcal{B}(\mathbb{R}^{n}))$, such that $M_{\theta
}[f
]=\int_{\mathbb{R}^{n}}f(x)m_{\theta }(dx)$.

Let $\bar{\mathcal{P}}=\{m_\theta: \theta\in\bar{\Theta}\}$ be the family of all probability measures on $(\mathbb{R}^{n},\mathcal{B}(\mathbb{R}^{n}))$ such that
\[
\int_{\mathbb{R}^n}f(x)m_{\theta}(dx)\leq \bar{\Lambda}[f], \ \ \forall f\in C_{2p-1,Lip}(\mathbb{R}^n).
\]
Then from the above result, we obtain that \begin{equation*}
\bar{\Lambda}[f ]=\sup\limits_{\theta\in \bar{\Theta}}\int_{\mathbb{R}^n}f(x)m_{\theta}(dx), \ \ \forall f\in C_{2p-1,Lip}(\mathbb{R}^n).
\end{equation*}
Now we prove that $\mathcal{\bar{P}}$ is weakly compact. Set $f_{i}%
(x)=(|x|-i)^{+}\wedge1$, it is easy to check that $f_{i}\subset C_{2p-1,Lip}%
(\mathbb{R}^{n})$ and $f_{i}\downarrow0$. Then by Lemma \ref{newds}, we obtain%
\[
\sup_{\theta \in \bar{\Theta}}m_{\theta}(\{|x|\geq i+1\})\leq \bar{\Lambda}%
[f_{i}]\downarrow0.
\]
Thus $\mathcal{\bar{P}}$ is tight. Let $m_{\theta_{i}}$, $i\geq1$, converge
weakly to $m$. Then by the definition of weak convergence, we can get for any
$f\in C_{2p-1,Lip}(\mathbb{R}^{n})$, $N>0$, $M>0$,%
\[
\int_{\mathbb{R}^{n}}(f(x)\wedge N)\vee(-M)m(dx)\leq \bar{\Lambda}[(f\wedge
N)\vee(-M)].
\]
Note that $f\vee(-M)-(f\wedge N)\vee(-M)\downarrow 0$ as $N\uparrow \infty$, then
by Lemma \ref{newds}, we can get%
\[
0\leq \bar{\Lambda}[f\vee(-M)]-\bar{\Lambda}[(f\wedge N)\vee(-M)]\leq
\bar{\Lambda}[f\vee(-M)-(f\wedge N)\vee(-M)]\downarrow0.
\]
Thus by the monotone convergence theorem under $m$, we obtain%
\[
\int_{\mathbb{R}^{n}}f(x)\vee(-M)m(dx)\leq \bar{\Lambda}[f\vee(-M)],
\]
which implies $\int_{\mathbb{R}^{n}}f(x)\vee(-M)m(dx)\in \mathbb{R}$.
Similarly, we can get $\int_{\mathbb{R}^{n}}f(x)m(dx)\leq \bar{\Lambda}[f]$.
Thus $m\in \mathcal{\bar{P}}$, which completes the proof.
\end{proof}

In the classical case, i.e., $\bar{\Lambda}[\cdot]$ is a linear expectation, it is easy to check that $\bar{\Theta}$ only has a single element $\theta_0$. In particular, the
probability measure $m_{\theta_0}$ is the unique invariant measure for the diffusion process $X$. Under the $G$-expectation framework, we can also give the following definition.
\begin{definition}
 A sublinear expectation $\tilde{\mathbb{E}}$
on $(\mathbb{R}^n, C_{2p,Lip}(\mathbb{R}^n))$ is said to be an  invariant expectation for the $G$-diffusion process $X$ if
\[
\tilde{\mathbb{E}}[\mathbb{\hat{E}}[f(X^x_t)]]=\mathbb{\tilde{E}}[f(x)] \ \ \text{for each $f\in C_{2p,Lip}(\mathbb{R}^n)$ \ and $t\geq0$}.
\]
The family of probability measures that represents $\tilde{\mathbb{E}}$ on $(\mathbb{R}^n, C_{2p-1,Lip}(\mathbb{R}^n))$
 is called
 invariant for the $G$-diffusion process $X$.
\end{definition}
\begin{remark}
{\upshape
For the invariant expectation $\mathbb{\tilde{E}}[\cdot]$, it corresponds to
the family of probability measures, which can be explained as the uncertainty
of the initial distribution. Given this uncertainty of the initial
distribution, the left-hand side of the equality in the above definiton can be
explained as the uncertainty of the distribution of $X_{t}$. Thus under the
invariant expectation $\mathbb{\tilde{E}}[\cdot]$, the distribution
uncertainty to the $G$-diffusion process $X$ is invariant in time.

}
\end{remark}

\begin{theorem}\label{HW5}
Assume \emph{(H1)} and \emph{(H2}) hold. Then there exists a unique invariant expectation $\tilde{\mathbb{E}}$ for
the $G$-diffusion process $X$. Moreover,
for each $f\in  C_{2p,Lip}(\mathbb{R}^n)$, we have
\[
\tilde{\mathbb{E}}[f]=\bar{\Lambda}[f].
\]
\end{theorem}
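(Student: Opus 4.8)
The plan is to prove that the sublinear expectation $\bar{\Lambda}$ constructed above is itself the unique invariant expectation. Existence reduces to checking the invariance identity for $\bar{\Lambda}$, while uniqueness together with the identification $\tilde{\mathbb{E}}=\bar{\Lambda}$ will follow by letting $t\rightarrow\infty$ in the invariance identity for an arbitrary candidate and invoking the exponential decay estimate of Theorem \ref{HW3}. As a preliminary I would record that, for each fixed $t$, the function $u^f(t,\cdot):=\hat{\mathbb{E}}[f(X^{\cdot}_t)]$ belongs to $C_{2p,Lip}(\mathbb{R}^n)$; this is exactly the H\"older-plus-Lemma \ref{HW2} computation already carried out inside the proof of Theorem \ref{HW3}, and it guarantees that both $\bar{\Lambda}$ and any invariant expectation $\tilde{\mathbb{E}}$ may legitimately be applied to $u^f(t,\cdot)$.

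For existence I take $\tilde{\mathbb{E}}=\bar{\Lambda}$. The key ingredient is the $G$-Markov (flow) property of the time-homogeneous $G$-SDE \eqref{App1}: using $X^x_{s+t}=X^{s,X^x_s}_{s+t}$ together with the independence of the increments of $B$ after time $s$, one obtains $\hat{\mathbb{E}}_s[f(X^x_{s+t})]=u^f(t,X^x_s)$. Applying $\hat{\mathbb{E}}$ and the tower property then yields $\hat{\mathbb{E}}[u^f(t,X^x_s)]=\hat{\mathbb{E}}[f(X^x_{s+t})]$. Letting $s\rightarrow\infty$, the left-hand side converges to $\bar{\Lambda}[u^f(t,\cdot)]$ by the very definition of $\bar{\Lambda}$, while the right-hand side converges to $\bar{\lambda}^f$ by Theorem \ref{HW3}; hence $\bar{\Lambda}[u^f(t,\cdot)]=\bar{\lambda}^f=\bar{\Lambda}[f]$, which is precisely the invariance condition.

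For uniqueness and the identification, let $\tilde{\mathbb{E}}$ be any invariant expectation, so that $\tilde{\mathbb{E}}[u^f(t,\cdot)]=\tilde{\mathbb{E}}[f]$ for every $t\geq0$. By sub-additivity and monotonicity one has $|\tilde{\mathbb{E}}[u^f(t,\cdot)]-\bar{\lambda}^f|\leq\tilde{\mathbb{E}}[|u^f(t,\cdot)-\bar{\lambda}^f|]$, and the estimate $|u^f(t,x)-\bar{\lambda}^f|\leq C_1(1+|x|^{2p})\exp(-\eta t)$ of Theorem \ref{HW3} bounds the right-hand side by $\exp(-\eta t)\,\tilde{\mathbb{E}}[C_1(1+|\cdot|^{2p})]$. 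Since $x\mapsto C_1(1+|x|^{2p})$ lies in $C_{2p,Lip}(\mathbb{R}^n)$, the number $\tilde{\mathbb{E}}[C_1(1+|\cdot|^{2p})]$ is finite, so letting $t\rightarrow\infty$ in $\tilde{\mathbb{E}}[u^f(t,\cdot)]=\tilde{\mathbb{E}}[f]$ forces $\tilde{\mathbb{E}}[f]=\bar{\lambda}^f=\bar{\Lambda}[f]$.

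I expect the main obstacle to be the uniqueness step rather than existence. Because the convergence $u^f(t,\cdot)\rightarrow\bar{\lambda}^f$ is not uniform in $x$ (its error grows like $|x|^{2p}$), one cannot pass to the limit through $\tilde{\mathbb{E}}$ by a crude sup-norm argument. The resolution is the observation that the polynomial majorant $C_1(1+|\cdot|^{2p})$ itself sits inside the domain $C_{2p,Lip}(\mathbb{R}^n)$ on which $\tilde{\mathbb{E}}$ is defined and finite; this is exactly why the definition of invariant expectation is posed on $C_{2p,Lip}$ rather than on the smaller class $C_{2p-1,Lip}$ appearing in Theorem \ref{HW4}. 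A secondary technical point to state carefully is the $G$-Markov property used in the existence step, which rests on the flow property of \eqref{App1} and the increment-independence of $G$-Brownian motion.
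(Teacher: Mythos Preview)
Your proposal is correct and follows essentially the same route as the paper: for existence you verify that $u^f(t,\cdot)\in C_{2p,Lip}(\mathbb{R}^n)$ and then use the flow/Markov identity $\hat{\mathbb{E}}[u^f(t,X^x_s)]=\hat{\mathbb{E}}[f(X^x_{s+t})]$ together with $s\to\infty$, while for uniqueness you feed the estimate $|u^f(t,x)-\bar{\lambda}^f|\leq C_1(1+|x|^{2p})e^{-\eta t}$ of Theorem \ref{HW3} into an arbitrary invariant $\tilde{\mathbb{E}}$ and let $t\to\infty$. The paper's proof is identical in structure; the only cosmetic difference is that it writes the Markov step via the substitution formula $\hat{\mathbb{E}}[\hat{\mathbb{E}}[f(X^x_t)]_{x=X^x_s}]$ (citing Lemma A.3 of \cite{HW}) rather than via $\hat{\mathbb{E}}_s$ and the tower property.
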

\begin{proof}
{\bf Existence:}
Denote $\bar{f}(x):=\mathbb{\hat{E}}[f(X^x_t)]$.
By Lemma \ref{HW2} and Theorem \ref{HW3}, we can find some constant $C_1$ such that
\begin{align*}
|\bar{f}(x)-\bar{f}(x^{\prime})|\leq & |\mathbb{\hat{E}}[f(X^x_t)]-\mathbb{\hat{E}}[f(X^{x^{\prime}}_t)]|\\
\leq &
K_f \mathbb{\hat{E}}[(1+|X^x_t|^{2p-1}+|X^{x^{\prime}}_t|^{2p-1})|X^{x}_t-X^{x^{\prime}}_t|]\\
\leq & C_1 \mathbb{\hat{E}}[(1+|X^x_t|^{2p-1}+|X^{x^{\prime}}_t|^{2p-1})^{\frac{2p}{2p-1}}]^{\frac{2p-1}{2p}}\mathbb{\hat{E}}[|X^{x}_t-X^{x^{\prime}}_t|^{2p}]^{\frac{1}{2p}}
\\
\leq &C_1\exp(-\eta t) (1+|x|^{2p-1}+|x^{\prime}|^{2p-1})|x-x^{\prime}|.
\end{align*}
Thus $\bar{f}(x)\in C_{2p,Lip}(\mathbb{R}^n)$. From Theorem \ref{HW3} and Lemma A.3 of \cite{HW}, we get
\begin{align*}
\bar{\Lambda}[\bar{f}]=\lim\limits_{s\rightarrow\infty}\hat{\mathbb{E}}[\bar{f}(X^x_s)]
&=\lim\limits_{s\rightarrow\infty}\hat{\mathbb{E}}[\hat{\mathbb{E}}[{f}(X^x_t)]_{x=X^x_s}]\\
&=\lim\limits_{s\rightarrow\infty}\hat{\mathbb{E}}[\hat{\mathbb{E}}[{f}(X^{s,x}_{s+t})]_{x=X^x_s}]\\
&=\lim\limits_{s\rightarrow\infty}\hat{\mathbb{E}}[\hat{\mathbb{E}}[{f}(X^{s,X^x_s}_{s+t})]]\\&
=\lim\limits_{s\rightarrow\infty}\hat{\mathbb{E}}[{f}(X^{x}_{t+s})]\\
&=\bar{\Lambda}[{f}],
\end{align*}
which concludes that $\bar{\Lambda}$
is an  invariant  expectation for the $G$-diffusion process $X$.

{\bf Uniqueness:}
Assume $\tilde{\Lambda}$ is also an invariant  expectation for the $G$-diffusion process $X$.
Then for each $f\in C_{2p,Lip}(\mathbb{R}^{n})$ and $t\geq 0$, we obtain
\begin{align*}
\tilde{\Lambda}[f]=\tilde{\Lambda}[\hat{\mathbb{E}}[f(X^{x}_t)]].
\end{align*}
By Theorem \ref{HW3}, there exists a constant $C_1$ such that
\begin{align*}|\bar{\Lambda}[f]-\hat{\mathbb{E}}[f(X^{x}_t)]|
 \leq C_1(1+|x|^{2p})\exp(-\eta t).
\end{align*}
Consequently, we derive that
\begin{align*}
|\bar{\Lambda}[f]-\tilde{\Lambda}[f]|\leq
\lim\limits_{t\rightarrow\infty}|\tilde{\Lambda}[\bar{\Lambda}[f]]-\tilde{\Lambda}[\hat{\mathbb{E}}[f(X^{x}_t)]]|
\leq C_1\lim\limits_{t\rightarrow\infty}\exp(-\eta t)\tilde{\Lambda}[(1+|x|^{2p})]=0,
\end{align*}
and this completes the proof.
\end{proof}

\begin{theorem}\label{HW12}
 Assume \emph{(H1)-(H2)} hold and  $\tilde{\mathbb{E}}$ is a sublinear expectation on  $(\mathbb{R}^n, C_{2p,Lip}(\mathbb{R}^n))$.
 If there exists a point $t_0>0$ such that,
 \[
\tilde{\mathbb{E}}[\mathbb{\hat{E}}[f(X^x_{t_0})]]=\mathbb{\tilde{E}}[f(x)], \ \ \forall f\in C_{2p,Lip}(\mathbb{R}^n),
\]
 then $\tilde{\mathbb{E}}$ is the unique invariant  expectation for $X$.
 \end{theorem}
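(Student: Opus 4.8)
The plan is to upgrade the single-time invariance hypothesis to full invariance by iterating the semigroup generated by $X$ and then exploiting the exponential decay established in Theorem \ref{HW3}. Write $P_tf(x):=\hat{\mathbb{E}}[f(X^x_t)]$. The first step is to record the semigroup identity $P_{s+t}f=P_s(P_tf)$. This follows exactly as in the existence part of Theorem \ref{HW5}: using the flow property $X^x_{s+t}=X^{s,X^x_s}_{s+t}$ together with the time-homogeneity $X^x_t\overset{d}{=}X^{s,x}_{s+t}$ and the substitution/tower rule, one has
\[
P_s(P_tf)(x)=\hat{\mathbb{E}}[\hat{\mathbb{E}}[f(X^y_t)]_{y=X^x_s}]=\hat{\mathbb{E}}[f(X^x_{s+t})]=P_{s+t}f(x).
\]
The same computation (carried out in the proof of Theorem \ref{HW5} for $\bar f=P_tf$) shows that $P_t$ maps $C_{2p,Lip}(\mathbb{R}^n)$ into itself, which is what makes the iteration below legitimate.

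Next I would iterate the hypothesis. Since $P_{t_0}f\in C_{2p,Lip}(\mathbb{R}^n)$, applying the assumption to $P_{t_0}f$ gives $\tilde{\mathbb{E}}[P_{t_0}(P_{t_0}f)]=\tilde{\mathbb{E}}[P_{t_0}f]=\tilde{\mathbb{E}}[f]$, and by the semigroup identity $P_{t_0}(P_{t_0}f)=P_{2t_0}f$. An immediate induction then yields
\[
\tilde{\mathbb{E}}[P_{kt_0}f]=\tilde{\mathbb{E}}[f],\qquad \forall k\geq1,\ \forall f\in C_{2p,Lip}(\mathbb{R}^n).
\]

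The third step is to pass to the limit as $k\to\infty$. By Theorem \ref{HW3} we have the quantitative estimate $|P_{kt_0}f(x)-\bar{\Lambda}[f]|\leq C_1(1+|x|^{2p})\exp(-\eta kt_0)$. Using the elementary inequality $|\tilde{\mathbb{E}}[X]-\tilde{\mathbb{E}}[Y]|\leq\tilde{\mathbb{E}}[|X-Y|]$ (valid for any sublinear expectation, by sub-additivity and monotonicity), the constant preservation $\tilde{\mathbb{E}}[\bar{\Lambda}[f]]=\bar{\Lambda}[f]$, and the fact that $1+|x|^{2p}\in C_{2p,Lip}(\mathbb{R}^n)$ so that $\tilde{\mathbb{E}}[1+|x|^{2p}]<\infty$, I obtain
\[
|\tilde{\mathbb{E}}[P_{kt_0}f]-\bar{\Lambda}[f]|\leq\tilde{\mathbb{E}}[|P_{kt_0}f-\bar{\Lambda}[f]|]\leq C_1\exp(-\eta kt_0)\,\tilde{\mathbb{E}}[1+|x|^{2p}]\longrightarrow0.
\]
Combining this with the invariance identity from the previous step forces $\tilde{\mathbb{E}}[f]=\bar{\Lambda}[f]$ for every $f\in C_{2p,Lip}(\mathbb{R}^n)$, i.e. $\tilde{\mathbb{E}}=\bar{\Lambda}$. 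Since $\bar{\Lambda}$ is the unique invariant expectation for $X$ by Theorem \ref{HW5}, this identifies $\tilde{\mathbb{E}}$ with that invariant expectation and finishes the proof.

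The argument is short, and the one point demanding care is the passage to the limit through the nonlinear functional $\tilde{\mathbb{E}}$: this is precisely where the explicit exponential rate in Theorem \ref{HW3} is essential, since the bare pointwise convergence $P_{kt_0}f\to\bar{\Lambda}[f]$ would not by itself permit exchanging the limit with a sublinear expectation. I would therefore verify carefully that the dominating function $1+|x|^{2p}$ indeed lies in $C_{2p,Lip}(\mathbb{R}^n)$, so that $\tilde{\mathbb{E}}$ is finite on it, and that the estimate of Theorem \ref{HW3} is applied to the fixed $f$ held throughout the iteration.
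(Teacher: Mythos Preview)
Your proof is correct and follows essentially the same approach as the paper: iterate the single-time invariance along the discrete times $kt_0$ via the semigroup identity, and then invoke the exponential estimate of Theorem \ref{HW3} to pass to the limit and identify $\tilde{\mathbb{E}}$ with $\bar{\Lambda}$. Your version is in fact more explicit than the paper's about why the limit commutes with the sublinear expectation $\tilde{\mathbb{E}}$, supplying the domination argument that the paper leaves implicit.
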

\begin{proof}
Denote $\bar{f}(x):=\mathbb{\hat{E}}[f(X^x_{t_0})]$. Then using the same method as in the proof of Theorem \ref{HW5},
we have
\[
\mathbb{\tilde{E}}[\bar{f}(x)]=\tilde{\mathbb{E}}[\mathbb{\hat{E}}[\bar{f}(X^x_{t_0})]]=
\tilde{\mathbb{E}}[\mathbb{\hat{E}}[\mathbb{\hat{E}}[f(X^x_{t_0})]_{x=X^x_{t_0}}]]=\tilde{\mathbb{E}}[\mathbb{\hat{E}}[f(X^x_{2t_0})]].
\]
In a similar way, we obtain for each integer $n\geq 1$,
\[\mathbb{\tilde{E}}[f(x)]=\tilde{\mathbb{E}}[\mathbb{\hat{E}}[f(X^x_{nt_0})]].\]
Then by Theorem \ref{HW3}, we get
\[
\mathbb{\tilde{E}}[f(x)]=\lim\limits_{n\rightarrow\infty}\tilde{\mathbb{E}}[\mathbb{\hat{E}}[f(X^x_{nt_0})]]=\bar{\lambda}^f,
\]
which is the desired result.
\end{proof}

Now we give some examples of invariant measures.

\begin{example}{
\upshape
Assume that $b(0)=h_{ij}(0)=\sigma(0)=0$, then it is easy to check that $X^0_t=0$. Then by Lemma \ref{HW2}, we obtain
$\hat{\mathbb{E}}[|X^x_t|]\leq \exp(-\eta t)|x|$ for each $t\geq 0$.
In particular, we obtain that \[
\bar{\Lambda}[f]=\lim\limits_{t\rightarrow\infty}\mathbb{\hat{E}}[f(X^0_t)]=f(0), \ \ \forall f\in C_{2p-1,Lip}(\mathbb{R}^n).
\]
Thus \[
\bar{\Lambda}[f]=\int_{\mathbb{R}^n}f(x)\delta_0(dx), \ \ \forall f\in C_{2p-1,Lip}(\mathbb{R}^n),
\]
where $\delta_0$ is Dirac measure.
}
\end{example}

Consider the following Ornstein-Uhlenbeck process driven by $G$-Brownian motion: for each $x\in\mathbb{R}^d$,
\begin{align}
\label{HW6}
Y_t^x=x-\alpha\int^t_0 Y_s^xds+B_t,
\end{align}
where $\alpha>0$ is a given constant. It is obvious that assumption (H2) holds for each $p\geq 1$ in this case.

\begin{lemma}\label{HW7}
The invariant expectation for $G$-Ornstein-Uhlenbeck process $Y$ is the  $G$-normal distribution of $\sqrt{\frac{1}{2\alpha}}B_1$.
\end{lemma}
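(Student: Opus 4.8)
The plan is to solve the linear $G$-SDE \eqref{HW6} in closed form, identify the law of its solution as a rescaled $G$-normal vector, and then send $t\to\infty$ to read off the invariant expectation through Theorem \ref{HW5}. First I would treat \eqref{HW6} as a linear equation and introduce the integrating factor $e^{\alpha s}$. Applying the $G$-It\^o formula to $e^{\alpha s}Y_s^x$ gives $d(e^{\alpha s}Y_s^x)=e^{\alpha s}\,dB_s$, and hence
\[
Y_t^x=e^{-\alpha t}x+\int_0^t e^{-\alpha(t-s)}\,dB_s .
\]
The first term is a deterministic shift, so all the probabilistic content sits in the $G$-It\^o integral $I_t:=\int_0^t e^{-\alpha(t-s)}\,dB_s$ of the deterministic, nonnegative integrand $e^{-\alpha(t-s)}$.

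Next I would determine the distribution of $I_t$. Approximating by the Riemann sums $\sum_j e^{-\alpha(t-s_j)}(B_{s_{j+1}}-B_{s_j})$, I would use that the increments $B_{s_{j+1}}-B_{s_j}$ are mutually independent and $\overset{d}{=}\sqrt{s_{j+1}-s_j}\,B_1$, together with the defining scaling property of the $G$-normal distribution in Definition \ref{def2.4}, namely $aX+b\bar X\overset{d}{=}\sqrt{a^2+b^2}\,X$ for $a,b\geq0$. Since all coefficients $e^{-\alpha(t-s_j)}$ are nonnegative, an induction on this property yields $\sum_j e^{-\alpha(t-s_j)}(B_{s_{j+1}}-B_{s_j})\overset{d}{=}\sqrt{\sum_j e^{-2\alpha(t-s_j)}(s_{j+1}-s_j)}\,B_1$. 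Refining the partition and using the $M_G^2$-convergence of the Riemann sums to $I_t$, the quantity under the square root converges to $\int_0^t e^{-2\alpha(t-s)}\,ds=\frac{1-e^{-2\alpha t}}{2\alpha}$, so that $I_t\overset{d}{=}\sqrt{\tfrac{1-e^{-2\alpha t}}{2\alpha}}\,B_1$. Consequently, for each $f\in C_{2p,Lip}(\mathbb{R}^d)$,
\[
\hat{\mathbb{E}}[f(Y_t^x)]=\hat{\mathbb{E}}\Big[f\Big(e^{-\alpha t}x+\sqrt{\tfrac{1-e^{-2\alpha t}}{2\alpha}}\,B_1\Big)\Big].
\]

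Finally I would let $t\to\infty$. Since $e^{-\alpha t}x\to0$ and $\frac{1-e^{-2\alpha t}}{2\alpha}\to\frac{1}{2\alpha}$, the $C_{2p,Lip}$ bound on $f$ together with the finite moments $\hat{\mathbb{E}}[|B_1|^{2p}]<\infty$ give $\hat{\mathbb{E}}[f(Y_t^x)]\to\hat{\mathbb{E}}[f(\sqrt{\tfrac{1}{2\alpha}}B_1)]$. By the definition of $\bar{\Lambda}$ and Theorem \ref{HW5}, the invariant expectation therefore satisfies $\tilde{\mathbb{E}}[f]=\bar{\Lambda}[f]=\hat{\mathbb{E}}[f(\sqrt{\tfrac{1}{2\alpha}}B_1)]$ for all $f\in C_{2p,Lip}(\mathbb{R}^d)$, which is precisely the $G$-normal distribution of $\sqrt{\tfrac{1}{2\alpha}}B_1$, as claimed. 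The main obstacle I anticipate is the rigorous identification of the law of $I_t$: one must carefully justify both the additive scaling of independent $G$-normal increments carrying nonnegative deterministic coefficients and the passage to the limit along refining partitions, matching the convergence of the laws (tested against $\varphi\in C_{Lip}$) with the $M_G^2$-convergence of the approximating sums. The subsequent limit $t\to\infty$ and the appeal to Theorem \ref{HW5} are then routine.
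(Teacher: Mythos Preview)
Your proposal is correct and follows essentially the same route as the paper: solve the linear $G$-SDE via the integrating factor $e^{\alpha s}$, identify the law of the stochastic integral through Riemann-sum approximation combined with the additive scaling property of the $G$-normal distribution, and then pass to the limit $t\to\infty$ and invoke Theorem~\ref{HW5}. The only cosmetic difference is that the paper computes the limit at $x=0$ (relying on Theorem~\ref{HW3} for $x$-independence), whereas you handle the deterministic shift $e^{-\alpha t}x$ directly via the $C_{2p,Lip}$ bound on $f$; both are equally valid.
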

\begin{proof}
From the $G$-It\^{o} formula, we get
\[
Y^x_t=\exp(-\alpha t)x+\exp(-\alpha t)\int^t_0\exp(\alpha s) dB_s, \ \text{for all $t\geq0$.}
\]

For each integer $N$, denote $t^N_i=\frac{it}{N}$ with $0\leq i\leq N$ and
$h^N_s:=\exp(\alpha t^N_i)\mathbf{1}_{[t^N_i,t^N_{i+1}}(s).$
Then it is obvious that
\[
\lim\limits_{N\rightarrow\infty}\hat{\mathbb{E}}[\int^t_0|\exp(\alpha s)-h^N_s|^2ds]=0.
\]
Thus $\|\int^t_0\exp(\alpha s) dB_s-\int^t_0 h^N_s dB_s\|_{L^2_G}\rightarrow 0$
as $N\rightarrow \infty$.

Note that $\int^t_0 h^N_s dB_s=\sum\limits_{i=0}^{N-1}\exp(\alpha t^N_i)(B_{t^N_{i+1}}-B_{t^N_{i}})$. Then
we get $\int^t_0 h^N_s dB_s$ and $\sqrt{\sum\limits_{i=0}^N\exp(2\alpha t^N_i)(t^N_{i+1}-t^N_{i})}B_1$ are  identically distributed.
Consequently, for each   $p\geq 1$ and $f\in C_{p,Lip}(\mathbb{R}^d)$,
\begin{align*}
\hat{\mathbb{E}}[f(\int^t_0\exp(\alpha s) dB_s)]=\lim\limits_{N\rightarrow\infty}\hat{\mathbb{E}}[f(\int^t_0 h^N_s dB_s)]=&\lim\limits_{N\rightarrow\infty}\hat{\mathbb{E}}[f(\sqrt{\sum\limits_{i=0}^N\exp(2\alpha t^N_i)(t^N_{i+1}-t^N_{i})}B_1)]\\=&\hat{\mathbb{E}}[f(\sqrt{\int^t_0\exp(2\alpha s)ds}B_1)]\\
=&\hat{\mathbb{E}}[f(\sqrt{\frac{1}{2\alpha}(\exp(2\alpha t)-1)}B_1)].
\end{align*}
Thus,  for each $p\geq 1$ and $f\in C_{p,Lip}(\mathbb{R}^d)$ , we have
\[
\hat{\mathbb{E}}[f(\exp(-\alpha t)\int^t_0\exp(\alpha s) dB_s)]=\hat{\mathbb{E}}[f(\sqrt{\frac{1}{2\alpha}(1-\exp(-2\alpha t))}B_1)].
\]
Applying Lemma
\ref{HW2} yields that
\[
\lim\limits_{t\rightarrow\infty}\hat{\mathbb{E}}[f(Y^0_t)]=\lim\limits_{t\rightarrow\infty}\hat{\mathbb{E}}[f(\exp(-\alpha t)\int^t_0\exp(\alpha s) dB_s)]=\lim\limits_{t\rightarrow\infty}\hat{\mathbb{E}}[f(\sqrt{\frac{1}{2\alpha}(1-\exp(-2\alpha t))}B_1)]=\hat{\mathbb{E}}[f(\sqrt{\frac{1}{2\alpha}}B_1)].
\]
Thus by Theorem \ref{HW5}, we obtain
\[
\bar{\Lambda}[f]=\hat{\mathbb{E}}[f(\sqrt{\frac{1}{2\alpha}}B_1)],
\]
which is the desired result.
 \end{proof}

 \begin{example}{
\upshape
Suppose $B$ is a  $1$-dimensional $G$-Brownian motion. For each $x\in\mathbb{R}$, let
\begin{align*}
Y_t^x=x+\int^t_0(m- Y_s^x)ds+B_t+\langle B\rangle_t,
\end{align*}
where $m$ is a given constant. From the $G$-It\^{o} formula, we get
\[
Y^x_t=\exp(-t)x+m(1-\exp(-t))+\int^t_0\exp(s-t) dB_s+\int^t_0\exp(s-t) d\langle B\rangle_s, \ \text{for all $t\geq0$.}
\]
By a similar analysis as in Lemma \ref{HW7}, we obtain that $\int^t_0\exp(s-t) dB_s+\int^t_0\exp(s-t) d\langle B\rangle_s$ and $\sqrt{\frac{1}{2}(1-\exp(-2 t))}B_1+(1-\exp(-t))\langle B\rangle_1$ are identically distributed.
Then  for each $p\geq 1$ and $f\in C_{p,Lip}(\mathbb{R})$ , we have
\[
\hat{\mathbb{E}}[f(Y^x_t)]=\hat{\mathbb{E}}[f(m+\sqrt{\frac{1}{2}}B_1+\langle B\rangle_1)].
\]
}
\end{example}

Next we shall consider the following $G$-diffusion process:
for each $x\in\mathbb{R}$,
\begin{align}
\label{HW6}
Y_t^x=x-\alpha\int^t_0 Y_s^xd\langle B\rangle_s+B_t,
\end{align}
where $\alpha>0$ is a given constant.  Applying the $G$-It\^{o} formula, we get
\[
Y^x_t=\exp(-\alpha \langle B\rangle_t)x+\exp(-\alpha \langle B\rangle_t)\int^t_0\exp(\alpha \langle B\rangle_s) dB_s, \ \text{for all $t\geq0$.}
\]
From Theorems \ref{HW3}, \ref{HW12} and Lemma \ref{HW7}, we have the following.
\begin{corollary}
Given a sublinear space $(\mathbb{R}, C_{p,Lip}(\mathbb{R}), \tilde{\mathbb{E}})$ and denote $\zeta(x)=x$ for $x\in\mathbb{R}$, then
$\tilde{\mathbb{E}}$ is  the invariant measure for $G$-process $Y^x$
if and only if for some point $t> 0$ and $x\in\mathbb{R}$,
  $\exp(-\alpha \langle B\rangle_t)\zeta+\exp(-\alpha \langle B\rangle_t)\int^t_0\exp(\alpha \langle B\rangle_s) dB_s$ and $\zeta$ are identically distributed, where $(B_t)_{t\geq 0}$ is independent from $\zeta$.
\end{corollary}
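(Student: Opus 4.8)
The plan is to reduce the distributional identity in the statement to the single-time invariance identity $\tilde{\mathbb{E}}[\hat{\mathbb{E}}[f(Y^x_t)]]=\tilde{\mathbb{E}}[f]$ and then invoke Theorem \ref{HW12}. First I would use the explicit representation displayed just above the corollary, namely
\[
Y^x_t=\exp(-\alpha\langle B\rangle_t)\,x+\exp(-\alpha\langle B\rangle_t)\int_0^t\exp(\alpha\langle B\rangle_s)\,dB_s,
\]
and observe that the random variable appearing in the statement is exactly $Y^\zeta_t$, i.e. the solution flow $Y^x_t$ evaluated at the (independent) random initial datum $x=\zeta$. Throughout, $\hat{\mathbb{E}}$ denotes the ambient sublinear expectation on a space carrying both $B$ and $\zeta$.

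The key step is the reformulation of the identical-distribution condition. Since $B$ is independent from $\zeta$ in the sense $B\perp\zeta$ of Definition \ref{def2.3}, and since $Y^x_t$ is, for each frozen $x$, a functional of $B$, applying Definition \ref{def2.3} to the test function $(x,\omega)\mapsto f(Y^x_t(\omega))$ yields, for every $f\in C_{p,Lip}(\mathbb{R})$,
\[
\hat{\mathbb{E}}[f(Y^\zeta_t)]=\hat{\mathbb{E}}\big[\hat{\mathbb{E}}[f(Y^x_t)]_{x=\zeta}\big]=\tilde{\mathbb{E}}\big[\hat{\mathbb{E}}[f(Y^x_t)]\big],
\]
where the last equality uses that the law of $\zeta$ under $\hat{\mathbb{E}}$ is $\tilde{\mathbb{E}}$ and that $x\mapsto\hat{\mathbb{E}}[f(Y^x_t)]$ is of class $C_{p,Lip}$ (inherited from the Lemma \ref{HW2} estimates, exactly as in the proof of Theorem \ref{HW5}). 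Since trivially $\hat{\mathbb{E}}[f(\zeta)]=\tilde{\mathbb{E}}[f]$, Definition \ref{def2.2} shows that ``$Y^\zeta_t$ and $\zeta$ are identically distributed'' is equivalent to
\[
\tilde{\mathbb{E}}\big[\hat{\mathbb{E}}[f(Y^x_t)]\big]=\tilde{\mathbb{E}}[f]\qquad\text{for all }f\in C_{p,Lip}(\mathbb{R}).
\]

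With this equivalence in hand the two implications are short. For the ``only if'' direction, if $\tilde{\mathbb{E}}$ is invariant for $Y$ then the displayed identity holds for \emph{every} $t\ge0$ by the very definition of invariant expectation, so in particular $Y^\zeta_t\overset{d}{=}\zeta$ for any fixed $t>0$. For the ``if'' direction, if $Y^\zeta_{t_0}\overset{d}{=}\zeta$ for some $t_0>0$, then the displayed identity holds at $t_0$; since (H1) holds and (H2) holds for every $p\ge1$ for this linear process, Theorem \ref{HW12} applies at the point $t_0$ and shows that $\tilde{\mathbb{E}}$ is the unique invariant expectation for $Y$.

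The main obstacle I anticipate is the reformulation step, because $G$-independence is not symmetric: one must verify that the hypothesis ``$B$ independent from $\zeta$'' is used in the correct order, so that the inner sublinear expectation integrates over $B$ with $\zeta$ frozen and thereby reproduces precisely $\hat{\mathbb{E}}[f(Y^x_t)]_{x=\zeta}$ rather than some other iterated expectation. A secondary, purely bookkeeping point is matching the test-function class: Theorem \ref{HW12} is phrased on $C_{2p,Lip}$ while the corollary works with $C_{p,Lip}$, which is harmless here since the coefficients are linear and (H2) is available for every $p\ge1$, but it should be recorded to keep the application of Theorem \ref{HW12} legitimate.
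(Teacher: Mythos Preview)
Your proposal is correct and follows the same route the paper intends: the paper does not give an explicit proof but simply records the corollary as a consequence of Theorems \ref{HW3}, \ref{HW12} and Lemma \ref{HW7}, and your argument is precisely the unpacking of that citation, with Theorem \ref{HW12} carrying the ``if'' direction and the definition of invariant expectation the ``only if'' direction. Your handling of the asymmetry of $G$-independence is correct (the hypothesis $B\perp\zeta$ freezes $\zeta$ first), and the bookkeeping remark on $C_{p,Lip}$ versus $C_{2p,Lip}$ is apt and harmless.
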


\section{Ergodic measure}
In this section,  we shall only consider non-degenerate $G$-Brownian motion,
i.e., there exist some constants $\underline{\sigma}^2 >0$ such that, for any $A\geq B$ \[
G(A)-G(B)\geq  \frac{1}{2}\underline{\sigma}^2tr[A-B].\]

  We begin with the following lemma, which is essentially from \cite{HW}.
\begin{lemma}\label{HWm1}
Assume \emph{(H1)} and \emph{(H2)} hold. Then for each $f\in C_{2p,Lip}(\mathbb{R}^n)$,
the following fully nonlinear  ergodic PDE:
\begin{align} \label{HW11}
&G(H(D_{x}^{2}{v},D_{x}{v},x))+\langle
b(x),D_{x}{v}\rangle+f(x)=\lambda^f,
\end{align}%
has a solution $(v,\lambda^f)\in C_{2p,Lip}(\mathbb{R}^n)\times\mathbb{R}$,
where
\begin{align*}
H_{ij}(D_{x}^{2}{v},D_{x}{v},x)=  &  \langle D_{x}^{2}{v}\sigma_{i}%
(x),\sigma_{j}(x)\rangle+2\langle D_{x}{v},h_{ij}(x)\rangle.
\end{align*}
Moreover, if $(\bar{v},\bar{\lambda})\in C_{2p,Lip}(\mathbb{R}^n)\times\mathbb{R}$ is also a solution to equation \eqref{HW11}, then we have
\[\bar{\lambda}=\lambda^f=\lim\limits_{T\rightarrow\infty}\frac{1}{T}\hat{\mathbb{E}}[\int^T_0 f(X^x_s)ds], \ \ \ \forall x\in\mathbb{R}^n.\]
\end{lemma}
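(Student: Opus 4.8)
The plan is to split the statement into three tasks: (i) construct a pair $(v,\lambda^f)$ solving the ergodic PDE \eqref{HW11}; (ii) identify the equation through the nonlinear Feynman--Kac formula; and (iii) establish the representation $\lambda^f=\lim_{T\to\infty}\frac1T\hat{\mathbb{E}}[\int_0^T f(X^x_s)\,ds]$, which will simultaneously force uniqueness of the additive eigenvalue. The natural tool for (i)--(ii) is the vanishing-discount method, while (iii) is where the genuine nonlinear difficulty sits.

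For existence I would, for each $\alpha>0$, set $v^\alpha(x):=\hat{\mathbb{E}}[\int_0^\infty e^{-\alpha s}f(X^x_s)\,ds]$; this is well defined because Lemma \ref{HW2}(ii) gives $\hat{\mathbb{E}}[|f(X^x_s)|]\le C(1+|x|^{2p})$ uniformly in $s$, and by the discounted Feynman--Kac/$G$-BSDE theory $v^\alpha$ solves $\alpha v^\alpha-G(H(D_x^2v^\alpha,D_xv^\alpha,x))-\langle b,D_xv^\alpha\rangle=f$. The crucial uniform-in-$\alpha$ estimate comes from Lemma \ref{HW2}(i): combining the $C_{2p,Lip}$ growth of $f$ with H\"older's inequality and the contraction $\hat{\mathbb{E}}[|X^x_s-X^{x'}_s|^{2p}]\le e^{-2p\eta s}|x-x'|^{2p}$ yields
\[
\hat{\mathbb{E}}[|f(X^x_s)-f(X^{x'}_s)|]\le C(1+|x|^{2p}+|x'|^{2p})^{\frac{2p-1}{2p}}e^{-\eta s}|x-x'|,
\]
so that $|v^\alpha(x)-v^\alpha(x')|\le \frac{C}{\eta}(1+|x|^{2p}+|x'|^{2p})^{\frac{2p-1}{2p}}|x-x'|$ independently of $\alpha$. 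Hence $\bar v^\alpha:=v^\alpha-v^\alpha(0)$ is locally equi-Lipschitz and locally uniformly bounded, while $\alpha v^\alpha(0)$ is bounded. By Arzel\`a--Ascoli there is a sequence $\alpha_n\downarrow0$ with $\bar v^{\alpha_n}\to v$ locally uniformly and $\alpha_n v^{\alpha_n}(0)\to\lambda^f$; since $\bar v^\alpha$ and $v^\alpha$ have identical derivatives and $\alpha_n\bar v^{\alpha_n}\to0$, stability of viscosity solutions lets me pass to the limit and obtain \eqref{HW11} with $(v,\lambda^f)\in C_{2p,Lip}(\mathbb{R}^n)\times\mathbb{R}$.

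For the representation, take any solution $(\bar v,\bar\lambda)$ and apply the $G$-It\^o formula to $\bar v(X^x_s)$. Using $H_{ij}=\langle D_x^2\bar v\,\sigma_i,\sigma_j\rangle+2\langle D_x\bar v,h_{ij}\rangle$, the drift and $d\langle B\rangle$ terms combine, and inserting $\langle b,D_x\bar v\rangle=\bar\lambda-f-G(H)$ gives
\[
\int_0^T f(X^x_s)\,ds=\bar\lambda T+\bar v(x)-\bar v(X^x_T)+A_T+\int_0^T\langle D_x\bar v\,\sigma,dB_s\rangle,
\]
where $A_T:=\frac12\int_0^T H_{ij}\,d\langle B^i,B^j\rangle_s-\int_0^T G(H)\,ds\le0$ q.s. by the same sublinearity inequality used in the proof of Lemma \ref{HW2}. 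Since the $G$-It\^o integral has zero $G$-expectation and $\hat{\mathbb{E}}[|\bar v(X^x_T)|]\le C(1+|x|^{2p})$ uniformly in $T$ (Lemma \ref{HW2}(ii)), dropping the non-positive $A_T$ and applying $\hat{\mathbb{E}}$ yields $\frac1T\hat{\mathbb{E}}[\int_0^T f\,ds]\le\bar\lambda+C(1+|x|^{2p})/T$, hence $\limsup_{T\to\infty}\frac1T\hat{\mathbb{E}}[\int_0^T f\,ds]\le\bar\lambda$.

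The main obstacle is the matching lower bound, and this is exactly where nonlinearity bites: $A_T\le0$ is only one-sided and $\hat{\mathbb{E}}[A_T]$ is a priori of order $T$, so the naive estimate does not return $\bar\lambda$. Here I would invoke the ergodic $G$-BSDE of \cite{HW}, writing $\bar v(X^x_t)$ as the $Y$-component of the ergodic $G$-BSDE with generator $f(X^x_\cdot)-\bar\lambda$; its decomposition carries a non-increasing $G$-martingale $K$ with $\hat{\mathbb{E}}[-K_T]=0$, which is precisely the term compensating $A_T$ and producing $\liminf_{T\to\infty}\frac1T\hat{\mathbb{E}}[\int_0^T f\,ds]\ge\bar\lambda$. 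The non-degeneracy assumption $\underline{\sigma}^2>0$ imposed at the start of this section is what secures well-posedness of this $G$-BSDE and the estimates on $K$. Combining the two bounds gives $\bar\lambda=\lim_{T\to\infty}\frac1T\hat{\mathbb{E}}[\int_0^T f(X^x_s)\,ds]$ for every $x$; as the right-hand side does not depend on the chosen solution, this forces $\bar\lambda=\lambda^f$, delivering both the representation and the uniqueness of the eigenvalue.
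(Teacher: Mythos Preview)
The paper's own proof consists of a single sentence citing Lemma~\ref{HW2} together with Theorems~5.4 and~5.5 of \cite{HW}; no argument is reproduced in the paper itself. Your proposal is therefore not really a variant of the paper's proof but an attempted reconstruction of the content of \cite{HW}: vanishing discount for existence and an ergodic $G$-BSDE for the representation and uniqueness of $\lambda^f$. At that level your outline is on the right track and matches what the paper defers to.

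There is, however, a genuine gap in the uniqueness/representation step. You take an arbitrary solution $(\bar v,\bar\lambda)\in C_{2p,Lip}(\mathbb{R}^n)\times\mathbb{R}$ of \eqref{HW11} and then ``apply the $G$-It\^o formula to $\bar v(X^x_s)$'' to obtain the decomposition with the non-positive term $A_T$. But membership in $C_{2p,Lip}$ gives no second-order regularity, and \eqref{HW11} is only asserted in the viscosity sense; the $G$-It\^o formula for $\bar v(X^x_s)$ (and hence the very quantities $D_x^2\bar v$, $A_T$, and the stochastic integral you write down) is not available. Your upper bound $\limsup_{T}\frac{1}{T}\hat{\mathbb{E}}[\int_0^T f\,ds]\le\bar\lambda$ therefore rests on an illegitimate step, and the same regularity issue prevents you from identifying $\bar v(X^x_t)$ with the $Y$-component of an ergodic $G$-BSDE by direct inspection. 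The argument in \cite{HW} goes the other way around: one first builds the ergodic $G$-BSDE pair $(Y,Z,K,\lambda^f)$ under the non-degeneracy condition, proves uniqueness of $\lambda^f$ and the limit formula at the BSDE level, and only then sets $v(x):=Y_0^x$ and shows it is a viscosity solution; uniqueness of $\bar\lambda$ for any other $C_{2p,Lip}$ viscosity solution is handled through a comparison/verification argument for viscosity solutions, not by applying It\^o's formula to $\bar v$. If you want a self-contained write-up, you should replace the It\^o step by that route.
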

\begin{proof}
The proof is immediate from Lemma \ref{HW2}, Theorems 5.4 and 5.5 of \cite{HW}.
\end{proof}

Denote a mapping $\Lambda: C_{2p,Lip}(\mathbb{R}^n)\mapsto\mathbb{R}$ by \[\Lambda[f]=\lambda^f. \]
 By a similar analysis as in Lemma \ref{HW3},
it is easy to check that ${\Lambda}$ is a sublinear expectation on $(\mathbb{R}^n, C_{2p,Lip}(\mathbb{R}^n))$.
\begin{lemma}
Assume \emph{(H1)} and \emph{(H2)} hold. Then we obtain
\begin{description}
\item[(a)] If $f_1\geq f_2$, then ${\Lambda}[f_1]\geq
{\Lambda}[f_2]$;
\item[(b)]  ${\Lambda}[c]=c$ for each constant $c$;
\item[(c)]  ${\Lambda}[f_1+f_2]\leq{\Lambda}%
[f_1]+{\Lambda}[f_2]$;
\item[(d)] ${\Lambda}[\lambda f]=\lambda
{\Lambda}[f]$ for each $\lambda\geq0$.
\end{description}
\end{lemma}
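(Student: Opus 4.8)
The plan is to exploit the Cesàro representation of the ergodic constant supplied by Lemma \ref{HWm1}, namely
\[
\Lambda[f]=\lambda^f=\lim_{T\rightarrow\infty}\frac{1}{T}\hat{\mathbb{E}}\Big[\int_0^T f(X^x_s)\,ds\Big],\qquad\forall x\in\mathbb{R}^n,
\]
and to transfer each of the four defining properties from the sublinear expectation $\hat{\mathbb{E}}$ through the (linear) time integral and then through the limit. Since $C_{2p,Lip}(\mathbb{R}^n)$ is a vector space, $f_1+f_2$ and $\lambda f$ again lie in $C_{2p,Lip}(\mathbb{R}^n)$, so $\Lambda$ is well defined on them and, by Lemma \ref{HWm1}, each of the relevant limits exists and is independent of $x$. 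This last point is what makes the argument clean: because the limits exist individually, any inequality valid at every finite horizon $T$ passes to the limit without $\limsup/\liminf$ subtlety.

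Concretely, I would argue at a fixed finite horizon $T$ and then let $T\to\infty$. For (a), if $f_1\geq f_2$ then $\int_0^T f_1(X^x_s)\,ds\geq\int_0^T f_2(X^x_s)\,ds$ quasi-surely, so monotonicity of $\hat{\mathbb{E}}$ gives the corresponding inequality at level $T$; dividing by $T$ and taking the limit yields $\Lambda[f_1]\geq\Lambda[f_2]$. For (b), constant preservation of $\hat{\mathbb{E}}$ gives $\frac{1}{T}\hat{\mathbb{E}}[\int_0^T c\,ds]=c$ for every $T$, hence $\Lambda[c]=c$. For (d), positive homogeneity of $\hat{\mathbb{E}}$ together with $\int_0^T(\lambda f)(X^x_s)\,ds=\lambda\int_0^T f(X^x_s)\,ds$ gives $\frac{1}{T}\hat{\mathbb{E}}[\int_0^T(\lambda f)(X^x_s)\,ds]=\lambda\,\frac{1}{T}\hat{\mathbb{E}}[\int_0^T f(X^x_s)\,ds]$, and passing to the limit yields $\Lambda[\lambda f]=\lambda\Lambda[f]$.

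The only step requiring a word of care is sub-additivity (c). Using additivity of the integral and sub-additivity of $\hat{\mathbb{E}}$, one obtains for every $T$
\[
\frac{1}{T}\hat{\mathbb{E}}\Big[\int_0^T(f_1+f_2)(X^x_s)\,ds\Big]\leq\frac{1}{T}\hat{\mathbb{E}}\Big[\int_0^T f_1(X^x_s)\,ds\Big]+\frac{1}{T}\hat{\mathbb{E}}\Big[\int_0^T f_2(X^x_s)\,ds\Big].
\]
One cannot in general split the limit of a sum into a sum of limits, but Lemma \ref{HWm1} guarantees that the three limits (for $f_1+f_2$, $f_1$ and $f_2$) all exist separately; hence letting $T\to\infty$ gives $\Lambda[f_1+f_2]\leq\Lambda[f_1]+\Lambda[f_2]$. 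I do not anticipate any genuine obstacle here: the argument is entirely parallel to the proof that $\bar{\Lambda}$ is a sublinear expectation, with the single-time limit $\lim_t\hat{\mathbb{E}}[f(X^x_t)]$ replaced by the time-averaged limit supplied by Lemma \ref{HWm1}.
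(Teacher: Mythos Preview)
Your proposal is correct and matches the paper's approach: the paper gives no detailed proof here, merely remarking before the lemma that ``by a similar analysis'' (as for $\bar{\Lambda}$) the result is immediate from the limit representation in Lemma~\ref{HWm1} and the defining properties of the $G$-expectation $\hat{\mathbb{E}}$. Your write-up simply spells out those details, including the one nontrivial observation that the individual limits for $f_1$, $f_2$, and $f_1+f_2$ are already known to exist by Lemma~\ref{HWm1}, so the finite-$T$ inequality in (c) passes to the limit cleanly.
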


In addition, we also have the following result.
\begin{theorem}
Assume  \emph{(H1)} and \emph{(H2)} hold. Then
there exists a family of weakly compact probability measures $\{m_{\theta}\}_{\theta\in \Theta}$ defined on $(\mathbb{R}^n, \mathcal{B}(\mathbb{R}^n))$ such that
\[
\Lambda[f]=\sup\limits_{\theta\in \Theta}\int_{\mathbb{R}^n}f(x)m_{\theta}(dx), \ \ \forall f\in C_{2p-1,Lip}(\mathbb{R}^n).
\]
\end{theorem}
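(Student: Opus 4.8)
The plan is to mirror the argument of Theorem \ref{HW4}, exploiting that $\Lambda$ has already been shown to be a sublinear expectation on $(\mathbb{R}^n, C_{2p,Lip}(\mathbb{R}^n))$. First I would restrict $\Lambda$ to $C_{2p-1,Lip}(\mathbb{R}^n)$ and invoke Peng's representation theorem (Theorem 2.1 of Chapter 1 in \cite{P10}) to obtain a family of linear expectations $\{M_\theta\}_{\theta\in\Theta}$ with $\Lambda[f]=\sup_{\theta}M_\theta[f]$ for all $f\in C_{2p-1,Lip}(\mathbb{R}^n)$. To upgrade each $M_\theta$ into a genuine probability measure through the Daniell-Stone theorem, the one property I must verify is downward continuity: for every sequence $\{f_i\}\subset C_{2p-1,Lip}(\mathbb{R}^n)$ with $f_i\downarrow 0$ one has $\Lambda[f_i]\downarrow 0$, which then forces $M_\theta[f_i]\downarrow 0$ for each $\theta$.

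Establishing this continuity is the crux, and it is the analog for the Ces\`aro-type functional of Lemma \ref{newds}. By Lemma \ref{HWm1} I would use the representation $\Lambda[f]=\lim_{T\to\infty}\frac{1}{T}\hat{\mathbb{E}}[\int_0^T f(X^x_s)\,ds]$. Fixing $N>0$ and writing $k_i^N=\max_{|x|\le N}f_i(x)$, the pointwise bound $f_i(x)\le k_i^N+f_1(x)|x|/N$, combined with monotonicity, constant preservation, sub-additivity, and the Fubini-type inequality $\hat{\mathbb{E}}[\int_0^T g_s\,ds]\le\int_0^T\hat{\mathbb{E}}[g_s]\,ds$, yields
\[
\frac{1}{T}\hat{\mathbb{E}}\Big[\int_0^T f_i(X^x_s)\,ds\Big]\le k_i^N+\frac{1}{NT}\int_0^T\hat{\mathbb{E}}[f_1(X^x_s)|X^x_s|]\,ds.
\]
The point I would stress is that Lemma \ref{HW2}(ii) supplies a bound $\hat{\mathbb{E}}[f_1(X^x_s)|X^x_s|]\le C_1(1+|x|^{2p})$ that is \emph{uniform in the time} $s$; this is exactly what lets the time average survive. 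Letting $T\to\infty$ gives $\Lambda[f_i]\le k_i^N+C_1(1+|x|^{2p})/N$, and Dini's theorem gives $k_i^N\downarrow 0$, so sending first $i\to\infty$ and then $N\to\infty$ produces $\Lambda[f_i]\downarrow 0$. As in the Remark following Lemma \ref{newds}, the exponent $2p-1$ rather than $2p$ is essential here so that $f_1(x)|x|$ remains controlled by $1+|x|^{2p}$.

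The main obstacle is precisely this uniform-in-time control: unlike $\bar{\Lambda}[f]=\lim_t\hat{\mathbb{E}}[f(X^x_t)]$, the ergodic functional is a time average, so a merely $s$-dependent moment estimate would be useless and the bound of Lemma \ref{HW2}(ii) (together with the very existence of the Ces\`aro representation furnished by the ergodic PDE theory of Lemma \ref{HWm1}) is indispensable. Once downward continuity is in hand, the remainder is verbatim the argument of Theorem \ref{HW4}: the Daniell-Stone theorem produces, for each $\theta$, a unique probability measure $m_\theta$ on $(\mathbb{R}^n,\mathcal{B}(\mathbb{R}^n))$ with $M_\theta[f]=\int_{\mathbb{R}^n}f(x)\,m_\theta(dx)$; I would then take $\mathcal{P}=\{m_\theta:\theta\in\Theta\}$ to be the family of all probability measures satisfying $\int_{\mathbb{R}^n}f(x)\,m_\theta(dx)\le\Lambda[f]$, which delivers the supremum representation. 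Finally I would establish weak compactness exactly as before: tightness follows by applying the continuity property to $f_i(x)=(|x|-i)^+\wedge 1$, giving $\sup_\theta m_\theta(\{|x|\ge i+1\})\le\Lambda[f_i]\downarrow 0$, and closedness under weak limits follows from the truncation-plus-monotone-convergence argument already carried out in Theorem \ref{HW4}.
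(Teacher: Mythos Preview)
Your proposal is correct and is exactly what the paper intends: its own proof is the one-line ``similar to Theorem \ref{HW4}'', and you have faithfully reconstructed that argument, including the needed analogue of Lemma \ref{newds} for $\Lambda$ via the uniform-in-$s$ moment bound of Lemma \ref{HW2}(ii). Nothing is missing.
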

\begin{proof}
The proof is similar to Theorem \ref{HW4}.
\end{proof}

\begin{definition}
 A sublinear expectation $\tilde{\mathbb{E}}$
on $(\mathbb{R}^n, C_{2p,Lip}(\mathbb{R}^n))$ is said to be an  ergodic expectation for the $G$-diffusion process $X$ if
\[
\tilde{\mathbb{E}}[f]=\lim\limits_{T\rightarrow\infty}\frac{1}{T}\hat{\mathbb{E}}[\int^T_0 f(X^x_s)ds], \ \ \forall f\in C_{2p,Lip}(\mathbb{R}^n).
\]
The family of probability measures that represents $\tilde{\mathbb{E}}$ is called ergodic for the $G$-diffusion process $X$.
\end{definition}

\begin{proposition}\label{HW9}
Let \emph{(H1)} and \emph{(H2)} hold. Then
for each $v\in C_{2p-1,Lip}(\mathbb{R}^n)$ with $\partial_{x_i}v\in C_{2p-2,Lip}(\mathbb{R}^n)$ and $\partial_{x_ix_j}^2v \in  C_{2p-3,Lip}(\mathbb{R}^n)$,
we have
\[
\Lambda[-G(H(D_{x}^{2}{v},D_{x}{v},x))-\langle
b(x),D_{x}{v}\rangle]=\sup\limits_{\theta\in \Theta}\int_{\mathbb{R}^n}[-G(H(D_{x}^{2}{v},D_{x}{v},x))-\langle
b(x),D_{x}{v}\rangle] m_{\theta}(dx)=0.
\]
\end{proposition}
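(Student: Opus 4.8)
The plan is to recognize that the integrand
\[
g(x):=-G(H(D_{x}^{2}v,D_{x}v,x))-\langle b(x),D_{x}v\rangle
\]
is exactly the source term for which $v$ itself solves the ergodic PDE \eqref{HW11} with eigenvalue zero, so that the uniqueness of the eigenvalue in Lemma \ref{HWm1} forces $\Lambda[g]=0$; the first equality is then nothing but the representation of $\Lambda$ established just above.

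First I would check that $g\in C_{2p-1,Lip}(\mathbb{R}^{n})$, so that both $\Lambda[g]$ and its integral representation are meaningful. This is where the regularity hypotheses on $v$ enter. Tracking polynomial-growth degrees through products, one uses the elementary fact that a product of a $C_{a,Lip}$ and a $C_{b,Lip}$ function lies in $C_{a+b-1,Lip}$; since $D_{x}^{2}v\in C_{2p-3,Lip}$ and $\sigma_{i},\sigma_{j},h_{ij},b$ are Lipschitz (hence in $C_{1,Lip}$) by (H1), each term $\langle D_{x}^{2}v\,\sigma_{i},\sigma_{j}\rangle$ lies in $C_{2p-3,Lip}$ while $\langle D_{x}v,h_{ij}\rangle$ and $\langle b,D_{x}v\rangle$ lie in $C_{2p-2,Lip}$. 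Using that $G$ is Lipschitz on $\mathbb{S}_{d}$ (from $G(A)\leq\frac{1}{2}\bar{\sigma}^{2}|A|$ together with sublinearity), composition with $G$ preserves the class, so $H\in C_{2p-2,Lip}$ and $g\in C_{2p-2,Lip}\subset C_{2p-1,Lip}$.

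The main step is then immediate: by the very definition of $g$,
\[
G(H(D_{x}^{2}v,D_{x}v,x))+\langle b(x),D_{x}v\rangle+g(x)=0,
\]
so the pair $(v,0)\in C_{2p,Lip}(\mathbb{R}^{n})\times\mathbb{R}$ solves the ergodic PDE \eqref{HW11} with source $f=g$ (note $v\in C_{2p-1,Lip}\subset C_{2p,Lip}$, and the assumed regularity makes $v$ a classical, hence viscosity, solution). By the uniqueness of the eigenvalue in Lemma \ref{HWm1}, every solution of that equation has eigenvalue $\lambda^{g}=\Lambda[g]$; comparing with $(v,0)$ gives $\Lambda[g]=0$. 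Finally, since $g\in C_{2p-1,Lip}$, the representation theorem for $\Lambda$ established above yields $\Lambda[g]=\sup_{\theta\in\Theta}\int_{\mathbb{R}^{n}}g(x)\,m_{\theta}(dx)$, and combining this with $\Lambda[g]=0$ gives the asserted double equality.

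The main obstacle is purely the bookkeeping of the first step: keeping track of the growth degrees through the quadratic form $H$ so as to land in exactly $C_{2p-1,Lip}$, which is precisely why the hypotheses impose one-degree-down regularity on $D_{x}v$ and two-degrees-down on $D_{x}^{2}v$. The conceptual content, namely that $v$ solves its own ergodic equation with eigenvalue zero, is elementary, and the uniqueness of the eigenvalue does all the real work.
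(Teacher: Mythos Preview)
Your proof is correct and follows exactly the same approach as the paper: set $f=g$, observe that $(v,0)$ is then a solution of the ergodic PDE \eqref{HW11}, and invoke the uniqueness of the eigenvalue from Lemma \ref{HWm1} to conclude $\Lambda[g]=\lambda^{g}=0$. The paper's proof is a single sentence and omits the regularity bookkeeping you carried out; your added verification that $g\in C_{2p-1,Lip}(\mathbb{R}^{n})$ (so that both Lemma \ref{HWm1} and the representation theorem apply) is a welcome clarification rather than a different route.
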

\begin{proof}
Taking $f=-G(H(D_{x}^{2}{v},D_{x}{v},x))-\langle
b(x),D_{x}{v}\rangle$, by equation \eqref{HW11}, we obtain $\lambda^f=0$ and the proof is complete.
\end{proof}

\begin{example}{
\upshape
Assume that $b(0)=h_{ij}(0)=\sigma(0)=0$, then  we obtain that
\[
{\Lambda}[f]=\lim\limits_{T\rightarrow\infty}\frac{1}{T}\mathbb{\hat{E}}[\int^T_0f(X^0_t)dt]=f(0), \ \ \forall f\in C_{2p-1,Lip}(\mathbb{R}^n).
\]
Thus \[
\Lambda[f]=\bar{\Lambda}[f]=\int_{\mathbb{R}^n}f(x)\delta_0(dx), \ \ \forall f\in C_{2p-1,Lip}(\mathbb{R}^n).
\]
}
\end{example}

Note that $\hat{\mathbb{E}}[\int^T_0 f(X^x_s)ds]\leq \int^T_0 \hat{\mathbb{E}}[f(X^x_s)]ds$. Then it follows from Corollary \ref{HW8} that $\lambda^f\leq \bar{\lambda}^f$ and
$\Theta\subset\bar{\Theta}$. In the classical case, it is obvious  that $\Lambda=\bar{\Lambda}$. In particular, if $\bar{\Theta}$ only has a single element,
it is easy to check that $\lambda^f= \bar{\lambda}^f$.
However, in general we cannot get $\Lambda=\bar{\Lambda}$ under $G$-framework.
\begin{example}{
\upshape
Assuming $d=1$ and $0< \underline{\sigma}^2<\bar{\sigma}^2=1$.
Consider the following $G$-Ornstein-Uhlenbeck process:
for each $x\in\mathbb{R}$,
\begin{align}
\label{HW61}
Y_t^x=x-\frac{1}{2}\int^t_0 Y_s^xds+B_t.
\end{align}
Note that  $
Y^x_t=\exp(-\frac{1}{2} t)x+\exp(-\frac{1}{2} t)\int^t_0\exp(\frac{1}{2} s) dB_s $.
By Proposition \ref{HW9} and taking $v(x)=\frac{1}{2}x^4$,
 we have \[
 \Lambda[x^4-G(6x^2)]=\Lambda[x^4-3x^2]=0.
\]
It follows from Lemma \ref{HW7} that
$\bar{\Lambda}[x^4-3x^2]=\hat{\mathbb{E}}[B_1^4-3B_1^2]$.
Denote by $E_{\sigma}$ the linear expectation corresponding to the normal distributed density function
$N(0, \sigma^2)$ with $\underline{\sigma}^2\leq \sigma^2\leq 1$. Then for each  $p\geq 1$ and $f\in C_{p,Lip}(\mathbb{R})$ , $$\mathbb{\hat{E}}[f(B_1)]\geq \sup\limits_{\underline{\sigma}^2\leq \sigma^2\leq 1}E_{\sigma}[f(B_1)].$$
From the definition of $G$-expectation, we obtain that $\hat{\mathbb{E}}[B_1^4-3B_1^2]=\hat{\mathbb{E}}[\hat{\mathbb{E}}[(x+B_1-B_{\frac{1}{2}})^4-3(x+B_1-B_{\frac{1}{2}})^2]_{x=B_{\frac{1}{2}}}]$.
Set $g(x)=\hat{\mathbb{E}}[(x+B_1-B_{\frac{1}{2}})^4-3(x+B_1-B_{\frac{1}{2}})^2]$ and $g_1(x)={{E}}_1[(x+B_1-B_{\frac{1}{2}})^4-3(x+B_1-B_{\frac{1}{2}})^2]$,
$g_2(x)={{E}}_{\underline{\sigma}}[(x+B_1-B_{\frac{1}{2}})^4-3(x+B_1-B_{\frac{1}{2}})^2]$.
It is obvious that $g(x)\geq g_1\vee g_2(x).$
After direct calculus, we obtain
\[
g_1(x)=x^4-\frac{3}{4}, \ g_2(x)=x^4+3(\underline{\sigma}^2-1)x^2+\frac{3}{4}\underline{\sigma}^4-\frac{3}{2}\underline{\sigma}^2.
\]
Consequently,
\[
g_1\vee g_2(x)=g_1(x)\mathbf{1}_{|x|>\frac{\sqrt{1-\underline{\sigma}^2}}{2}}+g_2(x)\mathbf{1}_{|x|\leq \frac{\sqrt{1-\underline{\sigma}^2}}{2}}.
\]
Then we have
\begin{align*}
E_1[g_1\vee g_2(B_{\frac{1}{2}})]=&E_1[B_{\frac{1}{2}}^4-\frac{3}{4}\mathbf{1}_{|B_{\frac{1}{2}}|> \frac{\sqrt{1-\underline{\sigma}^2}}{2}}+(3(\underline{\sigma}^2-1)B_{\frac{1}{2}}^2+\frac{3}{4}\underline{\sigma}^4-\frac{3}{2}\underline{\sigma}^2)\mathbf{1}_{|B_{\frac{1}{2}}|\leq \frac{\sqrt{1-\underline{\sigma}^2}}{2}}]\\
=&3E_1[[\frac{1}{4}(1-\underline{\sigma}^2)^2-(1-\underline{\sigma}^2)B_{\frac{1}{2}}^2]\mathbf{1}_{|B_{\frac{1}{2}}|\leq \frac{\sqrt{1-\underline{\sigma}^2}}{2}}]
\\
\geq & 3E_1[[\frac{1}{4}(1-\underline{\sigma}^2)^2-(1-\underline{\sigma}^2)B_{\frac{1}{2}}^2]\mathbf{1}_{|B_{\frac{1}{2}}|\leq \frac{\sqrt{1-\underline{\sigma}^2}}{4}}]\\
\geq &\frac{9}{16} (1-\underline{\sigma}^2)^2 E_1[\mathbf{1}_{|B_{\frac{1}{2}}|\leq \frac{\sqrt{1-\underline{\sigma}^2}}{4}}]>0.
\end{align*}
Thus we get $\hat{\mathbb{E}}[B_1^4-3B_1^2]\geq E_1[g_1\vee g_2(B_{\frac{1}{2}})]>0$ and $\bar{\Lambda}[x^4-3x^2]\neq\Lambda[x^4-3x^2].$
}
\end{example}
\begin{example}{
\upshape
Assuming $d=1$ and $0< \underline{\sigma}^2<\bar{\sigma}^2=1$.
Let us consider equation \eqref{HW6} with $\alpha=\frac{1}{2}$. Under each linear expectation  $E_{\sigma}$ with $\underline{\sigma}^2\leq \sigma^2\leq 1$, it is easy to check that the invariant measure of equation \eqref{HW6} is the standard normal distributed
density function $E_1$.
However,  we claim that the invariant measure of equation \eqref{HW6} cannot be the normal distributed
density function $E_1$.
Otherwise, the ergodic measure of equation \eqref{HW6}  is also the normal distributed
density function $E_1$. Therefore, by Proposition \ref{HW9} and taking $v(x)=x^2$,
 we have \[
 \Lambda[-G(2-2x^2)]=E_1[-G(2-2B_1^2)]=E_1[(\underline{\sigma}^2(1-B^2_1)^--(1-B^2_1)^+)]=(\underline{\sigma}^2-1)E_1[(1-B^2_1)^+]\neq 0,
\]
which is a contradiction.
}
\end{example}

\begin{remark}{\upshape
Assume $d=1$ and  $b(x)=-x$, $h(x)=0$  and $\sigma=1$.
Then consider the following equation:
 \begin{align}\label{HW10}
 \begin{cases}
&\partial_tu-G(D_{x}^{2}u)+xD_{x}u=0,\ (t,x)\in(0,\infty)\times\mathbb{R},\\
& u(0,x)=f(x).
\end{cases}
\end{align}
Denote $\bar{u}(t,x):=\int^t_0 u(s,x)ds=\int^t_0\hat{\mathbb{E}}[f(X^x_s)]ds.$
Assume $u(s,x)$ is a smooth function.
Then \[
\partial_t\bar{u}(t,x)=u(t,x), \ \partial_x\bar{u}(t,x)=\int^t_0 \partial_xu(s,x)ds,\  \partial_{xx}^2\bar{u}(t,x)=\int^t_0 \partial_{xx}^2u(s,x)ds.
\]
In the linear case, i.e., $G(a)=\frac{1}{2}a$, it is easy to check that
 \begin{align*}
\partial_t\bar{u}-\frac{1}{2}D_{x}^{2}\bar{u}+xD_{x}\bar{u}+f=0.
 \end{align*}
Then by the ergodic theory, we obtain \[
\bar{\Lambda}[f]=\lim\limits_{T\rightarrow\infty}\frac{1}{T}\int^T_0{E}[f(X^x_s)]ds=\lim\limits_{T\rightarrow\infty}\frac{\bar{u}(T,x)}{T}=\Lambda[f].\]
However, under the nonlinear expectation framework, there is no such  relationship for fully nonlinear PDE \eqref{HW10}.
}
\end{remark}

\begin{remark}{\upshape
In the linear expectation case, ergodic theory and related problems are connected with the invariant measure. However, from the above results,
this relationship may not hold true under the nonlinear expectation framework. Thus we should study nonlinear ergodic problems via ergodic expectation $\Lambda$
instead of invariant expectation $\bar{\Lambda}$. In particular, \cite{HW} obtained the links between  ergodic expectation  and  large time behaviour of solutions to fully nonlinear PDEs.
}
\end{remark}


\begin{thebibliography}{99}


\bibitem{DZ} Da Prato, G. and Zabczyk, J. (1996) Ergodicity for infinite-dimensional systems. London Mathematical
Society Note Series, 229, Cambridge University Press, Cambridge.

\bibitem{DH}  Debussche, A.,   Hu, Y. and  Tessitore, G. (2011) Ergodic BSDEs under weak dissipative
assumptions. Stochastic Process. Appl., 121(3), 407-426.

\bibitem {DHP11}Denis, L., Hu, M. and Peng S. (2011) Function spaces and
capacity related to a sublinear expectation: application to $G$-Brownian
motion pathes. Potential Anal., 34, 139-161.



\bibitem{G} \textsc{Gao, F.} (2009) {Pathwise properties and homomorphic flows for stochastic differential equations driven by $G$-Brownian motion}. {Stochastic Processes and their Applications,} {119}, 3356-3382.



\bibitem{HJPS}  Hu, M.,  Ji, S., Peng, S. and Song, Y. (2014) {Backward stochastic differential equations
driven by $G$-Brownian motion}. Stochastic Processes and their Applications, 124, 759-784.

\bibitem{HJPS1} Hu, M.,  Ji, S., Peng, S. and Song, Y. (2014) {Comparison theorem, Feynman-Kac formula
and Girsanov transformation for BSDEs driven
by $G$-Brownian motion}. Stochastic Processes and their Applications, 124,  1170-1195.

\bibitem {HP09} Hu, M. and Peng, S. (2009) On representation theorem of
$G$-expectations and paths of $G$-Brownian motion. Acta Math. Appl. Sin. Engl.
Ser., 25(3), 539-546.

\bibitem {HW} Hu, M. and Wang, F. (2014) Ergodic BSDEs driven by $G$-Brownian motion and their  applications, arxiv:1407.6210.

\bibitem{H} Hu, Y., Madec, P.-Y. and Richou, A. (2014) Large time behaviour of mild solutions of Hamilton-Jacobi-Bellman equations in infinite dimension by a probabilistic approach, arxiv:1406.5993v1.

\bibitem {L-P}Li, X. and Peng, S. (2011) Stopping times and related
It\^o's calculus with $G$-Brownian motion. Stochastic Processes and their
Applications, 121, 1492-1508.

%

\bibitem{Peng2005} Peng, S. (2005) {Nonlinear expectations and
nonlinear Markov chains,} Chin. Ann. Math., 26B(2) 159-184.

\bibitem {P07a}Peng, S. (2007) $G$-expectation, $G$-Brownian Motion and
Related Stochastic Calculus of It\^o type. Stochastic analysis and
applications, 541-567, Abel Symp., 2, Springer, Berlin.

\bibitem {P08a}Peng, S. (2008) Multi-dimensional $G$-Brownian motion and
related stochastic calculus under $G$-expectation. Stochastic Processes and
their Applications, 118(12), 2223-2253.

\bibitem {P10}Peng, S. (2010) Nonlinear expectations and stochastic
calculus under uncertainty, arXiv:1002.4546v1.

\bibitem {PengICM2010}Peng, S. (2010) Backward stochastic differential
equation, nonlinear expectation and their applications, in Proceedings of the
International Congress of Mathematicians Hyderabad, India.
281-307.

\bibitem{R}  Richou, A. (2009) Ergodic BSDEs and related PDEs with Neumann boundary conditions. Stochastic Process. Appl., 119, 2945-2969.

\bibitem{RM} Royer, M. (2004) BSDEs with a random terminal time driven by a monotone generator and their links with PDEs. Stoch.
Stoch. Rep., 76(4), 281-307.

\end{thebibliography}
\end{document}